\documentclass[reqno,11pt]{amsart}

\usepackage[margin=1in]{geometry}
\usepackage{amsmath,amssymb,amsthm,mathtools}
\usepackage{microtype}
\usepackage[hidelinks]{hyperref}

\allowdisplaybreaks
\numberwithin{equation}{section}

\newtheorem{theorem}{Theorem}[section]
\newtheorem{lemma}[theorem]{Lemma}
\newtheorem{fact}[theorem]{Fact}
\newtheorem{proposition}[theorem]{Proposition}
\newtheorem{corollary}[theorem]{Corollary}
\theoremstyle{definition}
\newtheorem{definition}[theorem]{Definition}

\newcommand{\cB}{\mathcal B}
\newcommand{\cH}{\mathcal H}
\newcommand{\EE}{\mathbb E}
\newcommand{\PP}{\mathbb P}
\newcommand{\HH}{\mathrm H}
\newcommand{\one}{\mathbf 1}
\newcommand{\poly}{\mathrm{poly}}
\newcommand{\llpoly}{\mathrel{\overset{\poly}{\ll}}}
\newcommand{\midtilde}[1]{\mathord{\mkern1mu\widetilde{\mkern-1mu#1\mkern-1mu}\mkern1mu}}
\newcommand{\e}{\mathrm e}

\title{Counting Near-Spanning Matchings in Latin Squares and Steiner Triple Systems}
\thanks{Research partially supported by NSF grant DMS-2300346 and Simons travel support SFI-MPS-TSM-00025377.}
\author{Yantao Tang}
\address{Department of Mathematics and Statistics, Georgia State University, Atlanta, GA 30303}
\email{ytang26@gsu.edu}
\author{Yi Zhao}
\address{Department of Mathematics and Statistics, Georgia State University, Atlanta, GA 30303}
\email{yzhao6@gsu.edu}
\date{}
\subjclass[2020]{Primary 05A16; Secondary 05B15, 05B07, 05D15.}
\keywords{Latin squares, Steiner triple systems, near-spanning matchings, asymptotic enumeration}

\begin{document}

\begin{abstract}
Montgomery recently proved that for sufficiently large $n$, every Latin square of order $n$ has a partial transversal with $n-1$ cells, and every Steiner triple system of order $n$ has a matching with $\lfloor n/3\rfloor-1$ edges, thus confirming the Ryser--Brualdi--Stein conjecture for even $n$ and the conjecture of Brouwer. We prove sharp enumerative refinements of these results: 
there is an absolute constant $c>0$ such that, for sufficiently large $n$,
\begin{itemize}
\item every Latin square of order $n$ has $ \left((1\pm n^{-c})\frac{n}{\e^2}\right)^n$
partial transversals with $n-1$ cells;
\item every Steiner triple system of order $n$ has $ \left((1\pm n^{-c})\frac{n}{2\e^2}\right)^{\lfloor n/3\rfloor}$
matchings with $\lfloor n/3\rfloor-1$ edges. 
\end{itemize}
The first estimate confirms predictions of Montgomery and Kelly.

\end{abstract}

\maketitle

\section{Introduction}

\subsection{Latin squares}

A Latin square of order $n$ is an $n\times n$ array on $n$ symbols in which every symbol appears exactly once in each row and in each column.  A \emph{partial transversal} is a set of cells using no row, column, or symbol more than once. A partial transversal with $i$ cells is an \emph{$i$-transversal}; an $n$-transversal is a (full) transversal.  A Latin square has an orthogonal mate if and only if its cells can be partitioned into transversals.  We refer to the surveys of Wanless~\cite{WanlessSurvey} and Montgomery~\cite{MontgomerySurvey} for further background, and to Pokrovskiy~\cite{PokrovskiySurvey} for related rainbow problems.

The Ryser--Brualdi--Stein conjecture asserts that every Latin square has a partial transversal with $n-1$ cells and that every Latin square of odd order has a transversal.  The second assertion cannot be extended to even order, since the addition table of a cyclic group of even order has no transversal.  Koksma~\cite{Koksma} first proved a universal lower bound of $2n/3$, and Drake~\cite{Drake} improved this to $3n/4$.  Brouwer, de Vries and Wieringa~\cite{BrouwerDeVriesWieringa} and Woolbright~\cite{Woolbright} independently obtained $n-O(\sqrt n)$.  Shor~\cite{Shor} with a correction by Hatami and Shor~\cite{HatamiShor} reduced the error to $O(\log^2 n)$, and Keevash, Pokrovskiy, Sudakov and Yepremyan~\cite{KPSY} obtained $O(\log n/\log\log n)$.  Montgomery~\cite{MontgomeryRBS} proved the $n-1$ assertion for all sufficiently large Latin squares.  The transversal assertion for odd order remains open.

A separate line of work concerns the number of transversals.  Building on earlier estimates of McKay, McLeod and Wanless~\cite{McKayMcLeodWanless}, Taranenko~\cite{Taranenko} proved that every Latin square of order $n$ has at most $\bigl((1+o(1))n/\e^2\bigr)^n$ transversals.  Glebov and Luria~\cite{GlebovLuria} gave a shorter entropy proof and determined the exponential order of the maximum.  Kwan~\cite{KwanSTS} showed that almost every Latin square has $\bigl((1-o(1))n/\e^2\bigr)^n$ transversals.  More precise typical results include the work of Gould and Kelly~\cite{GouldKelly}, Eberhard, Manners and Mrazovi\'c~\cite{EMM}, and Bowtell and Montgomery~\cite{BowtellMontgomery}.

In his final remarks \cite[Section 11]{MontgomeryRBS}, Montgomery said that ``\emph{Using this [the semi-random method] in combination with the methods introduced here should show that any Latin square of order $n$ has $\exp(\Theta( n \log n))$ transversals with $n-1$ elements}''. In a recent survey \cite[Section 6]{KellyNibble}, Kelly said that ``\emph{It is likely that
the methods of Montgomery \cite{MontgomeryRBS} imply that every order-$n$ Latin square has at least $((1-o(1))n/e^2)^n$ partial transversals of size $n-1$}''.

In this paper we confirm these two predictions.
For $0\le j\le n$, let $T_j(L)$ denote the number of $j$-transversals of a Latin square $L$. Since a full transversal is impossible for some Latin square, below we give sharp lower and upper bounds on $T_{n-1}(L)$ for all Latin squares $L$ of order $n$.

\begin{theorem}\label{thm:latin-main}
There is an absolute constant $c>0$ such that, for every sufficiently large integer $n$ and every Latin square $L$ of order $n$,
\[
 \left((1-n^{-c})\frac{n}{\e^2}\right)^n
 \le T_{n-1}(L)
 \le
 \left((1+n^{-c})\frac{n}{\e^2}\right)^n.
\]
\end{theorem}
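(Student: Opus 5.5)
The proof splits into an upper bound and a lower bound.

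\emph{Upper bound.} This part is soft. Each $(n-1)$-transversal extends to at most one full $n$-transversal: the missing row, column and symbol are determined, and the cell in the missing row and column either has the missing symbol or it does not. So it is more natural to bound $T_{n-1}(L)$ directly. I would adapt the entropy argument of Glebov--Luria (the proof of Taranenko's bound). Choose a uniformly random $(n-1)$-transversal $X$. Encode it by the row left uncovered (at most $n$ choices) together with a bijection from the remaining $n-1$ rows to columns. Then run the standard random-ordering entropy computation over those rows: when a row is revealed, the number of admissible cells is the number of cells whose column and symbol have both not yet been used. Averaging over random orderings gives the usual $\prod$-type estimate $\exp(\sum_k \log(\cdot))\approx (n/\e^2)^n$. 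The error terms are $e^{O(n\log n/\sqrt n)}$-type, which can be absorbed into $(1+n^{-c})^n$ provided the concentration in the entropy step is quantitative. The extra factor of $n$ for the missing row is $(1+O(\log n/n))^n$, which is also harmless. I would state this step as a quantitative version of Glebov--Luria with a polynomial error term.

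\emph{Lower bound.} I would follow Montgomery's absorption strategy, boosted by a counting nibble.

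First, as in \cite{MontgomeryRBS}, set aside a small random-like reserve. This consists of a set of rows, columns and symbols of size $n^{1-\epsilon}$ (or somewhat smaller), together with an absorbing structure. It should have the property that any partial transversal of the remaining square which covers all but the reserve and leaves the leftover sets "balanced and pseudorandom" can be completed to an $(n-1)$-transversal. The key point is that this completion step needs only to exist, not to be counted, since the reserve contributes only $e^{O(n^{1-\epsilon}\log n)}=(1+n^{-c})^{n}$-negligible factors.

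Second, count the partial transversals of the main part via a random greedy process, or semi-random nibble (in the style of Kwan's and Kelly's counting via the random greedy algorithm). At step $t$, with $n-t$ rows, columns and symbols left, the number of available cells in a typical remaining row stays close to $(n-t)^3/n^2$, with polynomial error, as long as $n-t\ge n^{1-\epsilon'}$. The product of the available choices, divided by the $(n-n^{1-\epsilon})!$ orderings, is $\prod_t (n-t)^2/n \,/\, (n-O(n^{1-\epsilon}))!$, which gives $\bigl((1-n^{-c})n/\e^2\bigr)^n$.

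One must also ensure that, with high probability over the process, the leftover satisfies the hypotheses of the absorber. This means the leftover rows, columns and symbols are "typical" with respect to the reserve. I would do this by choosing the reserve randomly in advance and tracking the extra degree statistics into the reserve as additional martingale trajectories. Distinct process runs may produce the same final transversal, but each is counted at most $(n-1)!$ times up to ordering, which is exactly the normalization already used. Thus the count of distinct $(n-1)$-transversals is at least (number of successful process outcomes)$/$(orderings).

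\emph{Main obstacle.} The hardest step is the compatibility between the nibble and Montgomery's absorption. Montgomery's completion argument requires very specific properties of the leftover, such as colour-distribution and edge-distribution conditions relative to his carefully built structure in \cite{MontgomeryRBS}. One must verify that a random greedy process, run to near-completion, produces a leftover meeting these conditions with probability at least $e^{-o(n)}$, or better $1-o(1)$, so that the count is not lost. Equivalently, one must check that the absorbing lemma of \cite{MontgomeryRBS} is robust enough to accept any leftover that is pseudorandom in the sense tracked by the process. My first attempt would be to apply Montgomery's result as a black box to a random subsquare-like structure. Concretely, I would check that his hypotheses are implied by polynomial-error degree/codegree conditions, and track those conditions by the differential-equations method.
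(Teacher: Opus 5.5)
Your upper bound is fine and close to the paper's. The paper applies a quantitative, maximum-degree version of Luria's vertex-revelation bound (Theorem~\ref{thm:entropy}) to each $\cH(L)-\{x,y,z\}$ and sums over the $n^3$ triples. Your row-revelation Glebov--Luria argument, after conditioning on the missing row, also works: the missing column and symbol add at most $2$ to each row's count, and no concentration is needed because Jensen's inequality gives the $O(n^{-1/2})$ error directly.

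The lower bound has a genuine gap at exactly the step you defer as the ``main obstacle'', and the repair you suggest would not work. Montgomery's addition and absorption structures (Lemmas~\ref{lem:mont-abs} and~\ref{lem:mont-add}, combined in Lemma~\ref{lem:reservoir}) complete only leftovers of size at most $\eta M$ with $\eta\llpoly\beta$, where $\beta M$ is the size of the reserve. No balance or pseudorandomness of the leftover is required; its size is the binding constraint. The reserve is a specific subset chosen inside the random sets $V_0,C_0$, not itself a random set, so deleting it can perturb degrees by up to $2\beta M$. The available counting tools then leave far more than $\eta M$ elements uncovered. With Lemma~\ref{thm:gjkl}, near-regularity forces $\beta M\le M^{1-\zeta}$, while the uncovered part has order $M^{1-\zeta^3}\gg\beta M\gg\eta M$. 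Tracking pseudorandomness of the leftover does not close this size gap. Checking Montgomery's hypotheses on a sparse leftover via degree/codegree statistics also fails: his lemmas need an $(M,p,\varepsilon)$-properly pseudorandom host with $M^{-1}\ll p$, which is not a polynomial relation, so density $M^{-\Omega(1)}$ is not covered. Moreover, (F3)--(F7), which concern rainbow $4$-cycles and colour-$c_0$ gadgets, do not follow from degree and codegree data.

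The missing idea is an intermediate reduction via Montgomery's almost-cover lemma (Lemma~\ref{lem:mont-finish}). Its conclusion holds simultaneously for \emph{every} triple of sets of size $qM$ containing pre-chosen random sets. The paper proceeds as follows.
\begin{enumerate}
\item[(1)] \emph{Before} counting, fix random $W_A,W_B,W_C$ of density $3q/4$ (with $q=M^{-\delta}$ and $\zeta^4<\delta<\zeta^3$), together with the reservoir $U_A,U_B,U_C$.
\item[(2)] Count almost-perfect matchings of the deterministic near-regular remainder $\cH^*$ by Lemma~\ref{thm:gjkl}.
\item[(3)] Remove $h$ edges from each counted matching so that exactly $qM$ elements, including the $W$'s, are uncovered in each class.
\item[(4)] Apply the almost-cover lemma to reduce the leftover to $\eta M$, then complete with the reservoir.
\end{enumerate}
Nothing about the counted matchings is used beyond their lying in $\cH^*$ and having size $m_0$. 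So no process analysis or success probability is needed, and the only loss is the multiplicity $\binom{M-1}{r_0+Q-1}\binom{M^2}{h}=\e^{O(Q\log M)}$.

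Separately, there are two cosmetic slips. A typical row has about $(n-t)^2/n$ available cells, not $(n-t)^3/n^2$. Also, $\prod_t(n-t)^2/n$ should not be further divided by $(n-O(n^{1-\epsilon}))!$.
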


Thus the universal count at size $n-1$ has the same leading exponential term as the extremal and typical counts of full transversals \cite{KwanSTS,Taranenko} mentioned earlier.

\subsection{Steiner triple systems}

A Steiner triple system of order $n$ is a $3$-uniform hypergraph in which every pair of vertices lies in exactly one edge.  Such a system exists only when $n\equiv1,3\pmod 6$.  A matching covering every vertex is \emph{perfect} and called a \emph{parallel class} of the system, and a matching covering all but one vertex is an \emph{almost parallel class}.  Let $m=\lfloor n/3\rfloor$.  An $m$-edge matching is a parallel class when $n=3m$ and an almost parallel class when $n=3m+1$. 
For $n=3m+1$, Wilson (see \cite{RosaColbourn}) observed that the projective Steiner triple systems of order $2^r-1$, with $r$ odd, have no almost parallel class; Bryant and Horsley~\cite{BryantHorsleyAlmost} later constructed another infinite family of such systems.  For $n=3m$, Bryant and Horsley~\cite{BryantHorsleyParallel} constructed an infinite family of Steiner triple systems with no parallel class.

Brouwer~\cite{BrouwerSTS} proved that every Steiner triple system contains a matching of size $n/3-O(n^{2/3})$ and conjectured that the size can be improved to $\lceil(n-4)/3\rceil$.  Alon, Kim and Spencer~\cite{AKS} reduced the number of uncovered vertices by the largest matching to $O(n^{1/2}\log^{3/2}n)$, and Keevash, Pokrovskiy, Sudakov and Yepremyan~\cite{KPSY} improved this to $O(\log n/\log\log n)$.  Montgomery~\cite{MontgomeryRBS} finally proved Brouwer's conjecture for sufficiently large $n$, that is, every sufficiently large Steiner triple system has a matching of size $m-1$. 

Kwan~\cite{KwanSTS} proved that every Steiner triple system has at most $\bigl((1+o(1))n/(2\e^2)\bigr)^{n/3}$ perfect matchings and, when $n\equiv 3 \pmod6$, almost every Steiner triple system has at least $\bigl((1-o(1))n/(2\e^2)\bigr)^{n/3}$ perfect matchings.  Ferber and Kwan~\cite{FerberKwan} subsequently proved that almost all Steiner triple systems of order $n$ divisible by 3 admit a decomposition of almost all their triples into disjoint perfect matchings.  

For a Steiner triple system $S$, let $N_j(S)$ denote the number of $j$-edge matchings. The results of \cite{BryantHorsleyAlmost,BryantHorsleyParallel} show that $N_m(S)=0$ for some $S$. Below we give a sharp estimate on $N_{m-1}(S)$, the number of the largest matchings guaranteed in every sufficiently large Steiner triple system.

\begin{theorem}\label{thm:sts-main}
There is an absolute constant $c>0$ such that the following holds whenever $n\equiv1,3\pmod6$ is sufficiently large.  Let $S$ be a Steiner triple system of order $n$, and put $m=\lfloor n/3\rfloor$.  Then
\[
 \left((1-n^{-c})\frac{n}{2\e^2}\right)^m
 \le N_{m-1}(S)
 \le
 \left((1+n^{-c})\frac{n}{2\e^2}\right)^m.
\]
\end{theorem}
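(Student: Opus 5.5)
The upper and lower bounds need different tools: an entropy argument for the upper bound, and a random greedy process followed by absorption for the lower bound.

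\textbf{Upper bound.} I would adapt the entropy argument of Kwan for perfect matchings in Steiner triple systems. Each $(m-1)$-matching misses a set $U$ of $n-3(m-1)\le 6$ vertices, and there are at most $n^{6}$ choices of $U$. For fixed $U$, the $(m-1)$-matchings avoiding $U$ are perfect matchings of $S-U$. The hypergraph $S-U$ is an almost-regular, almost-linear $3$-graph on $n'=n-|U|$ vertices: every pair lies in at most one edge, and every vertex has degree $(n-1)/2-O(1)$. The entropy bound (equivalently Brégman-type, via a random ordering of the edges in a uniform random perfect matching) then gives at most
\[
\Bigl((1+O(n^{-c}))\tfrac{n}{2\e^2}\Bigr)^{n'/3}
\]
perfect matchings. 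Summing over $U$ costs a factor $n^{6}=(1+O(\log n/n))^{m}$, which is absorbed into the error term. I would redo the computation carefully to check that the $O(1)$ defect in degrees only affects the base by a factor $1+O(1/n)$.

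\textbf{Lower bound.} I would combine the random greedy (or nibble) process with Montgomery's absorption framework from~\cite{MontgomeryRBS}. The steps, in order, are as follows.

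1. Reserve a small random-like absorbing structure $A$ on $n^{1-\delta}$ vertices, following Montgomery's construction. It should have the property that for any leftover vertex set $W$ of the right size (up to $O(1)$ uncovered vertices) that is suitably random-looking, $A\cup W$ contains a matching of $m-1-(\text{number already chosen})$ edges covering all but at most one or four vertices. This is the content of the main theorem of~\cite{MontgomeryRBS}, used in a robust form.

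2. On the remaining vertex set, run the triangle-removal random greedy process until $n^{1-\delta}$ vertices remain. Standard analysis, via differential equations or Kwan's counting of process trajectories, shows that with high probability all degrees stay concentrated. The number of distinct outcomes is then at least $\bigl((1-n^{-c})\frac{n}{2\e^2}\bigr)^{m}$: at step $i$ the number of available edges is about $\frac{(n-3i)^2}{6}$, and dividing the product by $(m-n^{1-\delta}/3)!$ for the orderings gives the $\e^{-2}$ factor by Stirling's formula, exactly as in Kwan's lower-bound computation.

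3. Each typical partial matching extends via step 1 to at least one $(m-1)$-matching. Distinct partial matchings give distinct extensions up to a multiplicity of at most $\binom{m}{n^{1-\delta}}=\e^{o(n^{1-c'})}$, which is negligible.

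The main obstacle is step 1. The absorber must work for the specific random leftover produced by the process, not merely for a uniformly random set, and must guarantee the near-spanning completion even when $S$ has no parallel class. I would first try to show that the leftover of the greedy process is pseudorandom with respect to every relevant structure Montgomery uses. These structures are polynomially many, so a union bound over them combined with a martingale concentration estimate should suffice, and Montgomery's existence theorem can then be applied inside $A\cup W$.
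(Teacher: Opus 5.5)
\textbf{Upper bound.} This part is essentially the paper's argument. A Luria-type entropy bound for perfect matchings of $S-U$ needs only $\Delta(S-U)\le (n-1)/2$ and $\Delta_2\le 1$, so no degree defect has to be controlled. Summing over the at most $n^4$ uncovered sets $U$ (each of size $3$ or $4$) is harmless.

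\textbf{A slip in Step 2.} After $i$ steps the remaining hypergraph has about $(n-3i)^3/(6n)$ edges, not $(n-3i)^2/6$; it is not an STS on $n-3i$ vertices. With your formula, Stirling gives $(n/(2\e))^m$, which exceeds your own upper bound. The corrected formula gives $n/(2\e^2)$.

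\textbf{The real gap is Step 1.} It carries the whole lower bound and is left as a black box.
\begin{itemize}
\item Montgomery's main theorem only asserts that an $(m-1)$-matching exists. Its robust ingredients (Lemmas~\ref{lem:mont-finish}--\ref{lem:mont-add}) are statements about a properly pseudorandom coloured bipartite graph $G_\Pi$, that is, about matchings of crossing triples of a tripartition $\Pi$.
\item A greedy process run in all of $S$ uses non-crossing triples. It leaves a set that is not balanced across $\Pi$. The addition/absorption structures only take balanced leftovers of size at most $\eta M$ per class, so they cannot absorb this set.
\item Your proposed fix is to prove the leftover is pseudorandom for ``polynomially many structures'' via a union bound and martingales. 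This does not match what the tools need. The almost-cover lemma, which is what brings an $n^{1-\delta}$-size leftover down to the $\eta M$ scale, requires the uncovered set to \emph{contain independently chosen random sets} $A',B',C'$. That is not a counting property one verifies by a union bound, and a greedy-process leftover has no reason to satisfy it.
\end{itemize}

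\textbf{The missing idea.} You never need the counted matchings to have random-looking leftovers. The paper proceeds as follows.
\begin{enumerate}
\item Take an ordered equipartition $\Pi$ of $V^\circ$ for which $G_\Pi$ is properly pseudorandom; a $1-o(m^{-2})$ fraction of equipartitions qualify.
\item \emph{Before counting}, choose random sets $W_A,W_B,W_C$ of density $\Theta(M^{-\delta})$ for the almost-cover lemma, and a reservoir $U_A,U_B,U_C$ combining the addition and absorption structures (Lemma~\ref{lem:reservoir}).
\item Delete these sets and count almost-perfect matchings in the remaining near-regular linear hypergraph. The paper uses GJKL; a corrected greedy-process count would also do.
\item Drop $h$ edges from each counted matching so that its uncovered set in each class has size exactly $Q=qM$ and contains $W$.
\item Complete every counted matching deterministically. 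The almost-cover lemma holds for \emph{every} such superset of $W$. The reservoir handles \emph{every} matching leaving at most $\eta M$ elements per class.
\item The resulting multiplicity is only $\e^{O(Q\log M)}$.
\end{enumerate}
Finally, each $(m-1)$-matching crosses exactly $6^m$ ordered equipartitions. Double counting with $(3m)!/((m!)^3 6^m)$, which is $(9/2)^m$ up to polynomial factors, turns the per-partition count $(m/(3\e^2))^m$ into $(n/(2\e^2))^m$.

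Your Step 2 counts in all of $S$ and so would avoid this double count. It could only be used with a non-partite absorber, and Montgomery's work does not supply one.
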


\subsection*{Overview of the proofs}

Both lower bounds are proved in the same colored-graph setting.  A Latin square gives a $3$-partite $3$-uniform hypergraph whose vertex classes are the rows, columns, and symbols.  For a Steiner triple system, we choose a balanced tripartition and retain only triples that meet each part once.  In either case the resulting hypergraph is represented by a properly edge-colored bipartite graph, with hypergraph matchings corresponding to rainbow matchings.

Montgomery's proof for Latin squares starts from the corresponding properly colored $K_{n,n}$ and exposes random vertex and color sets.  A semi-random argument provides an almost-perfect rainbow matching in any subgraph that contains suitably sized random sets of vertices and colors, leaving a small balanced remainder.  Because algebraic obstructions of Latin squares prevent an arbitrary remainder from being absorbed, Montgomery chooses an identity color and builds two complementary structures: an addition structure transforms the remainder into a matching in that color, and an absorption structure ``absorbs'' the endpoints in this matching.  The almost-cover, absorption, and addition lemmas used here are recorded as Lemmas~\ref{lem:mont-finish}--\ref{lem:mont-add}; together they leave precisely two vertices uncovered and hence produce an $(n-1)$-edge rainbow matching.

To prove our general lower bound, we combine the almost-perfect matching
count of Glock, Joos, Kim, K\"uhn and Lichev~\cite{GJKL} with Montgomery's approach.
We first fix both the random vertex and color
sets required by the almost-cover lemma and a completion reservoir
combining the addition and absorption structures.  After deleting these sets, we count almost-perfect matchings in the remaining near-regular hypergraph. Each counted matching is then slightly
trimmed so that the almost-cover lemma and the fixed reservoir can
complete it to an $(n-1)$-edge matching. Finally, we bound how many
counted matchings can lead to the same completed matching, which
preserves the required exponential lower bound. This lower bound can be applied directly to Latin squares.

For Steiner triple systems, we first randomly partition their vertices into three sets to obtain properly pseudorandom colored bipartite graphs, and then apply the same general lower bound. A final double count over the ordered tripartitions then gives the required lower bound for Steiner triple systems.

For the upper bounds, we use a quantitative maximum-degree version of Luria’s entropy argument ~\cite{Luria}.

\subsection*{Notation}
We use $\log$ to denote the natural logarithm with base $e$.
For a graph or hypergraph $G$, we denote by $V(G), E(G)$ its vertex set and edge set.
For a hypergraph $\cH$ and a vertex $v$ in its vertex set $V(\cH)$, we write $d_{\cH}(v)$, $\delta(\cH)$, $\Delta(\cH)$ and $\Delta_2(\cH)$ for its vertex degree, minimum degree, maximum degree and maximum 2-degree (maximum number of edges that contain two fixed vertices).

We use hierarchies of constants to specify their dependencies. We write $x\ll y$ to say that there is a positive increasing function $f:(0,1]\to \mathbb{R}$ such that the assertion under discussion is valid whenever $x\le f(y)$.  The stronger relation $x\llpoly y$ means that one may take $f(y)=y^K/K$ for some constant $K>0$. The functions behind a hierarchy with several constants are chosen from right to left.  We omit floors and ceilings unless they are crucial.  

\subsection*{Organization of the paper}

Section~\ref{sec:inputs} contains the colored-graph model, lemmas from \cite{GJKL,MontgomeryRBS}, and the basic probabilistic and entropy facts.  In Section~\ref{sec:reservoir}, we combine Montgomery's addition and absorption structures into one reservoir.  Section~\ref{sec:lower} proves a general lower-bound theorem.  The entropy estimate is established in Section~\ref{sec:entropy}, and Theorems~\ref{thm:latin-main} and \ref{thm:sts-main} are proved in Sections~\ref{sec:latin} and~\ref{sec:sts}.  Appendix~\ref{app:montgomery-definitions} gives Montgomery's definitions of typicality and proper pseudorandomness.
\section{Preliminary tools}\label{sec:inputs}

\subsection{Edge-colored bipartite graphs and linear 3-partite 3-uniform hypergraphs}

Let $G$ be a properly edge-colored bipartite graph with vertex classes $A$ and $B$. We denote by $C(G)$ the set of colors in $G$.  For $c\in C(G)$, let $E_c(G)$ be the set of edges in $G$ of color $c$.  For an edge subset $F\subseteq E(G)$, we denote by $V(F)$ and $ C(F)$ the sets of vertices and colors respectively of edges in $F$.  For a color subset $C\subseteq C(G)$, a matching of $G$ is \emph{$C$-rainbow} if its edges have distinct colors belonging to $C$.

Every properly edge-colored bipartite graph $G$ with classes $A, B$ can be associated with a hypergraph $\cH(G)$, the $3$-partite $3$-uniform hypergraph with  three vertex classes $A$, $B$ and $C(G)$ and its edge set is $E(\cH(G)):=\bigl\{\{a,b,c\}:ab\in E(G)\text{ has color }c\bigr\}$. We can see that the hypergraph $\cH(G)$ is \emph{linear}, which means any two vertices of $\cH$ are in at most one edge of $\cH$.  Indeed, every edge of \(G\) has a unique color, and properness ensures that a vertex and a color occur together in at most one edge.
Matchings in $\cH(G)$ correspond exactly to rainbow matchings in $G$.

Montgomery \cite{MontgomeryRBS} worked on a larger class of colored graphs than complete bipartite graphs, \emph{$(M,p,\varepsilon)$-properly pseudorandom} bipartite graphs. Its definition \cite[Definition~3.11]{MontgomeryRBS} is long and technical, so we present it in Appendix~\ref{app:montgomery-definitions}. The following proposition from \cite{MontgomeryRBS} shows that every properly edge-colored $K_{n,n}$ with $n$ colors is $(n,1,\varepsilon)$-properly pseudorandom for any $\varepsilon>0$. 

\begin{proposition}[{\cite[Proposition~3.12]{MontgomeryRBS}}]\label{prop:mont-pseudorandom}
Let $1/n\llpoly\eta\llpoly\varepsilon\le1$.  Let $G$ be a properly colored bipartite graph with vertex classes $A$ and $B$ such that $|A|=|B|=n$, $|C(G)|\ge n$ and $\delta(G)\ge(1-\eta)n$.  Suppose that each color of $G$ appears at least $(1-\eta)n$ times on the edges of $G$.
Then, $G$ is $(n,1,\varepsilon)$-properly pseudorandom.
\end{proposition}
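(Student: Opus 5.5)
The plan is a direct verification of the appendix definition with $M=n$ and $p=1$. Near-completeness of $G$ should make every density condition hold almost deterministically, and any remaining randomness can be handled by Chernoff bounds. First I would unpack \cite[Definition~3.11]{MontgomeryRBS} as recorded in Appendix~\ref{app:montgomery-definitions}. As I understand it, the definition asks for two things. One is basic regularity: $|A|=|B|=M$, vertex degrees of order $pM$, and each colour used about $pM$ times. The other is a family of \emph{typicality} conditions. These say that for (random, or arbitrary sufficiently large) subsets $V'\subseteq A\cup B$ and $C'\subseteq C(G)$ of prescribed densities, the numbers of neighbours of a vertex in $V'$ via colours of $C'$, and the numbers of $C'$-coloured edges within the relevant vertex sets, are within a factor $1\pm\varepsilon$ of their expectations. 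These expectations are products of the densities with $pM$.

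For the regularity part I would use only the hypotheses $\delta(G)\ge(1-\eta)n$, $|C(G)|\ge n$, and that each colour appears at least $(1-\eta)n$ times. Properness and $|A|=|B|=n$ give every vertex degree at most $n$ and every colour class at most $n$ edges. Counting edges gives $|C(G)|(1-\eta)n\le e(G)\le n^2$, so $n\le|C(G)|\le n/(1-\eta)$. Thus all degrees, colour multiplicities and the number of colours equal $n$ up to a factor $1\pm O(\eta)$, which is within $\varepsilon$ since $\eta\llpoly\varepsilon$.

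For the typicality conditions, fix a vertex $v$ and a colour set $C'$. The $C'$-coloured neighbourhood of $v$ differs from ``all colours of $C'$'' by at most the $\le \eta n+|C(G)|-n=O(\eta n)$ colours missing at $v$. Similarly, a vertex set $V'$ misses at most $\eta n$ neighbours of $v$. Hence every relevant count equals its value in a genuinely complete $K_{n,n}$ coloured with $n$ colours, up to an additive error $O(\eta n)$. If the sets are random with density $\gg n^{-1/K}$, Chernoff bounds (Fact-type concentration) together with a union bound over the $\poly(n)$ choices of $v$ and colour give concentration to within $1\pm\varepsilon/2$. The additive $O(\eta n)$ error is then a relative error of at most $\varepsilon/2$ because the densities are polynomial in $\varepsilon$ and $\eta\llpoly\varepsilon$. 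For conditions involving pairs (codegrees), properness bounds the dependence: each vertex–colour pair spans at most one edge, so the variables are independent or have bounded differences, and McDiarmid's inequality suffices.

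The main obstacle is bookkeeping rather than conceptual. Some clauses of Montgomery's definition are probably stated for \emph{all} subsets of a given size rather than random ones, and they may involve the interaction of two colours, for example $2$-edge-coloured paths or switchings used in his addition structures. For such clauses I would first try a purely deterministic argument. In near-complete $G$, the number of objects avoiding any fixed $\eta n$-sized obstruction is at least the complete count minus $O(\eta n^{k})$, which is enough when the target size is a polynomial fraction of $n^k$. I would check that every threshold in the definition is polynomial in $\varepsilon$, so that choosing $1/n\llpoly\eta\llpoly\varepsilon$ absorbs all losses.
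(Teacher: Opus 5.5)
\textbf{There is a genuine gap: your plan verifies a guessed definition rather than the one in the paper.} The paper does not reprove this statement; it quotes Montgomery. So the benchmark is Definition~\ref{def:proper-pseudorandom} in the appendix. That definition involves no random subsets $V',C'$ and no concentration; it consists of \textnormal{(F1)}--\textnormal{(F7)}.

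What your plan does cover:
\begin{itemize}
\item Your edge count correctly gives \textnormal{(F1)}.
\item \textnormal{(F2)} is equally deterministic. Degrees and colour multiplicities lie in $[(1-\eta)n,n]$, and inclusion--exclusion bounds every codegree in the three shadow graphs below by $(1-4\eta)n$. So Chernoff and McDiarmid play no role.
\end{itemize}
The substance of the proposition is \textnormal{(F3)}--\textnormal{(F7)}, and your last paragraph only speculates about what they might say.

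\textbf{Why the ``complete count minus $O(\eta n^k)$'' heuristic does not work.} These conditions are structural.
\begin{itemize}
\item In \textnormal{(F4)}--\textnormal{(F7)} you must find $\alpha n$ (or $\alpha n/12$) pairwise vertex- and colour-disjoint gadgets of bounded size. The natural argument is greedy. The obstruction at each step is the $O(\alpha n)$ vertices and colours already used, not an $\eta n$-sized set. Lower bounds on the number of available gadgets must come from properness alone: a vertex and a colour determine at most one edge.
\item There is no universal complete count. $G$ may be any Latin square, and for a fixed gadget shape and colour order the number of non-degenerate gadgets depends on $G$ and can be zero. For example, take the Cayley table of $\mathbb Z_2^m$ with $c_0=0$, and the alternating path $b_0a_0b_1a_1\cdots b_sa_s$ with colour-$c_0$ edges $b_ja_j$ and edges $a_{j-1}b_j$ of colour $d_j$. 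Then $a_s=a_0+d_1+\dots+d_s$, so the path collapses for every starting vertex whenever the $d_j$ sum to zero.
\item In \textnormal{(F6)} you can steer around such degeneracies, because you may choose which $k$ of the at least $5k$ colours of $\bar C$ to use.
\item In \textnormal{(F7)} there is no such freedom. The $(k+|\bar C|)$-edge matching must use every colour of $\bar C\cup C_i$, for an adversarial $\bar C$. So the sets $V_i,C_i$ and the gadget shapes must be chosen so that at least $\alpha^2 n$ indices $i$ work for every $\bar C$. This is the genuinely non-trivial part, and it is absent from your plan.
\end{itemize}

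\textbf{\textnormal{(F3)} is a direct count.} Take same-coloured edges $e=a_1b_1\neq f=a_2b_2$.
\begin{itemize}
\item Choose the other two vertices $x_1\in A$, $y_1\in B$ of $S_1$, in about $n^2$ ways. This fixes the colours $d_1,d_2$ of $b_1x_1$ and $a_1y_1$.
\item Properness then determines a candidate $S_2$ through $f$: use the $d_1$-neighbour of $b_2$ and the $d_2$-neighbour of $a_2$.
\item Only $O(\eta n^2+n)$ choices fail, through missing edges or coincidences.
\end{itemize}

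As written, your proposal establishes only \textnormal{(F1)} and \textnormal{(F2)}.
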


The next lemma is the semi-random almost-cover lemma of \cite{MontgomeryRBS}. It says that every choice of $qM$ vertices in each of the three classes that contains large random subsets admits a matching leaving at most $\eta M$ vertices uncovered in each class.  
It only requires the host hypergraph to be \emph{$(M,p,\varepsilon)$-typical}, which is weaker than $(M,p,\varepsilon)$-properly pseudorandom, see Appendix~\ref{app:montgomery-definitions} for their definitions.

\begin{lemma}[{\cite[Theorem~4.1]{MontgomeryRBS}}]\label{lem:mont-finish}
Let $M^{-1}\llpoly\varepsilon\llpoly\eta\llpoly p,q\le1$.  Let $2q/3\le q_A,q_B,q_C\le q$.  Let $\cH$ be an $(M,p,\varepsilon)$-typical linear $3$-partite $3$-uniform hypergraph with vertex classes $A$, $B$ and $C$. 
Choose $A'\subseteq A$, $B'\subseteq B$ and $C'\subseteq C$ independently by including each element with probability $q_A$, $q_B$ and $q_C$, respectively. 
Then, with high probability the following holds.

Given any sets $\widehat A\subseteq A$, $\widehat B\subseteq B$ and $\widehat C\subseteq C$ with size $qM$ such that $A'\subseteq\widehat A$, $B'\subseteq\widehat B$ and $C'\subseteq\widehat C$, there is a matching in $\cH[\widehat A\cup\widehat B\cup\widehat C]$ with at least $qM-\eta M$ edges.
\end{lemma}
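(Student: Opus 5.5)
The plan is to reduce the lemma to a deterministic statement about the random sets, and then find the matching through a near-perfect fractional matching with small edge weights. The key tool for the last step is the Kahn / Pippenger--Spencer type theorem: a hypergraph with small codegrees that carries a fractional matching of total weight $w$, all of whose edge weights are tiny, contains a matching with $(1-o(1))w$ edges. Our $\cH$ is linear, so all codegrees are at most $1$. The task is therefore to build, for every admissible $\widehat A,\widehat B,\widehat C$, a fractional matching of $\cH[\widehat A\cup\widehat B\cup\widehat C]$ that:
\begin{itemize}
\item has vertex loads in $[1-\eta/10,1]$ on all but $\eta M/10$ vertices of each class;
\item has every edge weight at most $M^{-\Omega(1)}$.
\end{itemize}

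\emph{Step 1 (whp properties of the random sets).} From $(M,p,\varepsilon)$-typicality we get that degrees, and the relevant joint neighbourhood counts, are $(1\pm\varepsilon)$ times their expected values. Chernoff bounds and a union bound over the polynomially many typicality constraints then give the following, with high probability. Every vertex $v\in A$ has $(1\pm\varepsilon^{1/2})pq_Bq_CM$ edges into $B'\times C'$, and symmetrically for $B$ and $C$. Moreover, the pair-type counts controlling the second moments in the nibble are also concentrated. We fix an outcome with all these properties. The rest of the argument is deterministic and quantifies over all $\widehat A,\widehat B,\widehat C$; this is essential, because there are exponentially many such choices and no union bound over them is available.

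\emph{Step 2 (fractional matching).} Fix $\widehat A,\widehat B,\widehat C$. Write $X_A=\widehat A\setminus A'$, which has size at most $(q-q_A)M+o(M)\le qM/3+o(M)$, and define $X_B$, $X_C$ similarly. The weights are assigned in two layers.
\begin{itemize}
\item Each extra vertex $x\in X_A$ spreads weight $1$ uniformly over its $\approx pq_Bq_CM$ edges into $B'\times C'$, and similarly for $X_B$ and $X_C$. Each such edge gets weight $O(1/(pM))$.
\item This consumes load on the random vertices, and the load is not uniform, since the extras are adversarial. So each random vertex receives load from the first layer, and the remaining deficit is filled by a second layer of weights on edges inside $A'\times B'\times C'$.
\end{itemize}
The second layer is found by a correction argument. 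Start from the uniform weight $1/(pq_Bq_CM)$ on edges inside $A'\times B'\times C'$, scaled down by a factor accounting for the average first-layer load. Then adjust the weights vertex by vertex using the Step~1 concentration, for instance via a max-flow / Hall-type argument showing that bounded weights achieving the target loads exist.

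The first layer is harmless on average: the total load it places on $B'$ is $|X_A|+|X_C|\le 2qM/3$, spread over $|B'|\approx q_BM\ge 2qM/3$ vertices. The danger is local. An adversary could choose $X_A$ so that its neighbourhoods concentrate on a few vertices of $B'$, pushing their load above $1$.

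\emph{Main obstacle.} Controlling this local overload is the crux. I would first try to bound, via typicality and Step~1, the maximum load that any set $X$ of size at most $qM/3$ can place on any single random vertex $b$. This load is $\sum_{x\in X}|N(x)\cap N(b)\cap C'|/(pq_Bq_CM)$, and linearity gives that each $x\in X$ contributes at most $1/(pq_Bq_CM)$. Hence each $b$ receives at most $|X|/(pq_BM\cdot q_C)$, which could still be large.

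If this crude bound fails, I would instead cap the loads: discard the at most $\eta M/10$ overloaded random vertices, accepting them as uncovered, and truncate their edges. The justification would be that a concentration count shows only few random vertices can be overloaded, again via a typicality-type double count of $\sum_b \mathrm{load}(b)^2$.

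After truncation, Kahn's theorem applied to the resulting fractional matching, of total weight at least $qM-\eta M/2$, yields a genuine matching with at least $qM-\eta M$ edges, as required.
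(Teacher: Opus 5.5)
\paragraph{Verdict.} There is a genuine gap: the proposal leaves the step you yourself call the crux as a conditional plan, and the fallback you state for it is wrong as formulated. The paper gives no proof of this lemma (it quotes Montgomery's Theorem~4.1), so your argument has to stand on its own. Its architecture is viable: a fractional matching with tiny edge weights on $\cH[\widehat A\cup\widehat B\cup\widehat C]$, rounded by a Kahn/Pippenger--Spencer-type theorem.

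\paragraph{Where the crux fails.} The crude bound does fail. Put into $X_A$ the vertices outside $A'$ that form an edge with a fixed $b_0\in B'$ and a vertex of $C'$; this gives $b_0$ a load of order $1/q$ when $q_A=q_B=q_C=2q/3$.

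Discarding vertices whose load exceeds $1$ does not repair this. In that boundary case with $|X_A|=|X_B|=|X_C|\approx qM/3$, the first layer alone puts total load $|X_A|+|X_C|\approx|B'|$ on $B'$. So the mean load is $1\pm O(M^{-1/2})$, possibly above $1$ because of the fluctuations of $|A'|,|B'|,|C'|$. A constant fraction of $B'$ can then exceed $1$, far more than $\eta M/10$ vertices.

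\paragraph{What has to be supplied.} Take auxiliary $\varepsilon\llpoly\varepsilon'\llpoly\eta'\llpoly\eta$. You need four things.
\begin{enumerate}
\item[(a)] A whp property of the random sets: for all distinct $x,x'\in A$, the number of $b\in B'$ with edges $xbc$ and $x'bc'$, where $c,c'\in C'$, is $(1\pm\varepsilon')p^2q_Bq_C^2M$. This is where the codegree clause of Definition~\ref{def:typical-bipartite} enters. Linearity forces $c\neq c'$, and you need a concentration inequality that tolerates the mild dependence between the terms.
\item[(b)] The resulting deterministic estimate, valid for every $X_A$. Let $L_b$ be the load $X_A$ puts on $b$ and $\mu=|X_A|/|B'|$; note that $\sum_bL_b=|X_A|$ exactly. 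Then
\[
 \sum_{b\in B'}(L_b-\mu)^2=O\Bigl(\varepsilon'\frac{|X_A|^2}{q_BM}+\frac{|X_A|}{pq_Bq_CM}\Bigr).
\]
By Chebyshev, only few $b$ have $|L_b-\mu|>\eta'$.
\item[(c)] A Cauchy--Schwarz bound showing that the total weight on edges through these exceptional vertices is $o(\eta M)$. This is what controls the load that truncation strips from the extra vertices, a loss your truncation step ignores. A single vertex of load of order $1/q$ carries weight from many extra vertices.
\item[(d)] A global rescaling of all weights by $1-O(\eta')$, in place of a threshold at $1$.
\end{enumerate}

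\paragraph{Two further corrections.} First, the second layer needs no max-flow or Hall argument, and there is no evident flow formulation for prescribed loads on $3$-uniform edges anyway. Let $\mu_A=(|X_B|+|X_C|)/|A'|$, and define $\mu_B,\mu_C$ similarly, be the average first-layer loads. Then
\[
 |A'|(1-\mu_A)=|B'|(1-\mu_B)=|C'|(1-\mu_C)=qM-|X_A|-|X_B|-|X_C|.
\]
So the single uniform weight $w=(qM-|X_A|-|X_B|-|X_C|)/(pq_Aq_Bq_CM^2)$ on the edges of $\cH[A'\cup B'\cup C']$ gives every random vertex its class's average deficit up to a factor $1\pm\varepsilon'$. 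The remaining per-vertex errors are absorbed by the rescaling in (d).

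Second, Kahn's theorem is only qualitative. The hierarchy $\varepsilon\llpoly\eta$ requires the rounding step to lose only a polynomially small fraction, and so does the use of this lemma in Theorem~\ref{thm:lowerbound}, where $\eta$ is a negative power of $M$. You therefore need a rounding result with polynomial error terms. For example, keep each edge independently with probability proportional to its weight, discard the few low-load vertices, and apply a polynomial-error Pippenger--Spencer-type bound.

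With (a)--(d) supplied, your route should go through, but these are exactly the parts of the proof that are missing.
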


The following lemma of \cite{MontgomeryRBS} prepares an absorption structure for structured leftovers.  For almost every color $c$ in a random color set and almost every small matching $E$ of color $c$, it constructs an absorption structure 
that avoids small forbidden sets and can ``absorb'' any submatching of $E$ of prescribed size. 

\begin{lemma}[{\cite[Theorem~3.4]{MontgomeryRBS}}]\label{lem:mont-abs}
Let $M^{-1}\ll p,q_V,q_C\le1$ and $M^{-1}\llpoly\varepsilon\llpoly\gamma\llpoly\beta\llpoly\alpha\llpoly\log^{-1}M$.
Let $G$ be an $(M,p,\varepsilon)$-properly pseudorandom bipartite graph with classes $A,B$.  Choose $V\subseteq V(G)$ and $C\subseteq C(G)$ independently, including each element with probability $q_V$ and $q_C$, respectively.  With probability $1-o(1)$, all but at most $\alpha M$ colors $c\in C$ have a set $F_c\subseteq E_c(G)$ satisfying $|E_c(G[V])\setminus F_c|\le\alpha M$ and the following property.

For every $0\le\ell_0\le\ell_1\le\gamma M$, every set $E\subseteq F_c$ with $|E|=\ell_1$, and every $V'\subseteq V(G)$ and $C'\subseteq C(G)$ with $|V'|,|C'|\le10\gamma M$, there are sets $V_{\mathrm{abs}}\subseteq V\setminus\bigl(V(E)\cup V'\bigr)$ and $C_{\mathrm{abs}}\subseteq C\setminus C'$ such that $|A\cap V_{\mathrm{abs}}|=|B\cap V_{\mathrm{abs}}|=\beta M-\ell_0$ and $|C_{\mathrm{abs}}|=\beta M$.  For every $E'\subseteq E$ with $|E'|=\ell_0$, the graph $G[V_{\mathrm{abs}}\cup V(E')]$ contains a $C_{\mathrm{abs}}$-rainbow matching with $\beta M$ edges.
\end{lemma}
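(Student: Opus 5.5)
This is precisely Montgomery's absorption theorem \cite[Theorem~3.4]{MontgomeryRBS}, and in the paper I would simply import it. The only check is that our hypotheses match his. Namely, $G$ is $(M,p,\varepsilon)$-properly pseudorandom in the sense of Appendix~\ref{app:montgomery-definitions}, with the same hierarchy $M^{-1}\llpoly\varepsilon\llpoly\gamma\llpoly\beta\llpoly\alpha\llpoly\log^{-1}M$. Also, $V,C$ are binomial random subsets, which is what his random exposure uses. Nevertheless, here is how I would reconstruct the argument if it had to be proved from scratch.

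\textbf{Parity bookkeeping.} Fix a colour $c\in C$ and a set $E\subseteq F_c$ with $|E|=\ell_1$. The target is a $C_{\mathrm{abs}}$-rainbow matching with $\beta M$ edges in $G[V_{\mathrm{abs}}\cup V(E')]$. This vertex set has exactly $2\beta M$ vertices, so the matching must be perfect on it and must use every colour of $C_{\mathrm{abs}}$. The $\ell_0$ edges of $E'$ all have colour $c$, so at most one of them can itself be used. Hence the structure has to re-route: each absorbed edge $xy\in E'$ must have $x$ and $y$ matched with colours from $C_{\mathrm{abs}}$ through other vertices. Meanwhile, the copies of $x,y$ that are \emph{not} in $E'$ must be compensated by vertices of $V_{\mathrm{abs}}$ being covered instead.

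\textbf{Gadgets and distributive absorption.} The building blocks I would use are small \emph{switchers}. For an edge $e=xy\in E_c(G[V])$, a switcher is a bounded-size vertex set $S_e\subseteq V$ together with a colour set $D_e\subseteq C$, such that both $S_e\cup\{x,y\}$ and $S_e\cup\{u,v\}$ have perfect $D_e$-rainbow matchings. Here $\{u,v\}$ is a fixed "reference" pair drawn from a common reservoir. Such gadgets are found by short rainbow paths and cycles, alternating between colour $c$ and fresh colours.

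Proper pseudorandomness guarantees that for all but $\alpha M$ colours, and all but $\alpha M$ edges of each such colour, there are $\Omega(M^{k})$ such switchers. The excluded edges are exactly those removed when defining $F_c$. For these counts I would apply concentration (Lemma-style Chernoff/Janson bounds for the random sets $V,C$) together with a union bound over colours.

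To absorb an \emph{arbitrary} $\ell_0$-subset of an $\ell_1$-set, I would combine switchers via a robustly matchable bipartite template of maximum degree $O(1)$, as in Montgomery's distributive absorption. Its vertices are the $\ell_1$ edges on one side and $\ell_1-\ell_0$ plus a buffer of reference pairs on the other. The template has the property that deleting any $\ell_0$ vertices on the edge side still leaves a perfect matching. The switchers attached to the template's edges are then chosen greedily and disjointly, avoiding $V(E)\cup V'$ and $C'$. This is possible because $|V'|,|C'|\le 10\gamma M$ and $\gamma\ll\beta$ leave plenty of room. Finally, I would pad everything with a random reservoir that admits a perfect rainbow matching, to bring the sizes to exactly $\beta M-\ell_0$ vertices per side and $\beta M$ colours.

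\textbf{Main obstacle.} The hardest step is the existence of many switchers. In a Latin square, algebraic obstructions (e.g.\ cyclic groups) can forbid naive colour-switching, because a perfect rainbow matching on a set constrains the sum of colours. I would first try switchers that swap one colour-$c$ edge for a rainbow path of length three using two new colours. I expect that one must instead exploit the fixed colour $c$ itself, as Montgomery does, which is why the statement is formulated colour by colour and only for edges of a single colour $c$.
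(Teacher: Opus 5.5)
Your proposal matches the paper: the paper gives no proof of this lemma and imports it verbatim as Montgomery's Theorem~3.4. Your check that the hypotheses agree with his (proper pseudorandomness, the polynomial hierarchy, and independent binomial random sets $V$ and $C$) is therefore all that is required, and your from-scratch sketch is an optional heuristic that nothing in the paper relies on.
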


The addition lemma of \cite{MontgomeryRBS} converts an arbitrary small balanced leftover into the structured form required by absorption. More precisely, 
it finds an addition structure $(V_{\mathrm{add}}, C_{\mathrm{add}})$ such that, given any small set of vertices $\widehat A\cup \widehat B$ and a small set $\widehat C$ of colors, there is a matching in one specific color and a disjoint rainbow matching using all but one color from the $C_{\mathrm{add}}$ and $\widehat C$; together they cover all but two vertices of $V_{\mathrm{add}}\cup \widehat A\cup \widehat B$.  

\begin{lemma}[{\cite[Theorem~3.5]{MontgomeryRBS}}]\label{lem:mont-add}
Let $M^{-1}\ll p,q_V,q_C\le1$ and $M^{-1}\llpoly\varepsilon\llpoly\eta\llpoly\gamma\llpoly\alpha\llpoly\log^{-1}M$.
Let $G$ be an $(M,p,\varepsilon)$-properly pseudorandom bipartite graph with classes $A,B$.  Choose $V\subseteq V(G)$ and $C\subseteq C(G)$ independently, including each element with probability $q_V$ and $q_C$, respectively.  With probability $1-o(1)$, the following holds simultaneously for every $c\in C(G)$ and every $F\subseteq E_c(G)$ with $|E_c(G)\setminus F|\le\alpha M$.

There are sets $V_{\mathrm{add}}\subseteq V$ and $C_{\mathrm{add}}\subseteq C$ with $|A\cap V_{\mathrm{add}}|=|B\cap V_{\mathrm{add}}|=2\gamma M+1$ and $|C_{\mathrm{add}}|=\gamma M+1$, and they satisfy the following statement.  Whenever $\widehat A\subseteq A\setminus V_{\mathrm{add}}$, $\widehat B\subseteq B\setminus V_{\mathrm{add}}$ and $\widehat C\subseteq C(G)\setminus C_{\mathrm{add}}$ satisfy $|\widehat A|=|\widehat B|\le\eta M$ and $|\widehat C|\le\eta M$, the graph $G[V_{\mathrm{add}}\cup\widehat A\cup\widehat B]$ contains vertex-disjoint matchings $M_{\mathrm{id}}$ and $M_{\mathrm{rb}}$ such that
\begin{enumerate}
\item[\textnormal{(i)}] $M_{\mathrm{id}}$ consists of $\gamma M+|\widehat A|-|\widehat C|$ edges of $F$, all with both endpoints in $V_{\mathrm{add}}$;
\item[\textnormal{(ii)}] $M_{\mathrm{rb}}$ is $(C_{\mathrm{add}}\cup\widehat C)$-rainbow and has $|C_{\mathrm{add}}\cup\widehat C|-1$ edges.
\end{enumerate}
\end{lemma}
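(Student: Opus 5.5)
Lemma~\ref{lem:mont-add} is quoted from \cite[Theorem~3.5]{MontgomeryRBS}, and the paper only uses it. If I had to prove it, I would rebuild it as an absorption argument based on switchers and a robust template. First, the counting in (i)--(ii) is forced. The rainbow matching uses $\gamma M+|\widehat C|$ edges. The identity matching then has to cover the remaining $2\gamma M+1+|\widehat A|-\gamma M-|\widehat C|-1$ vertices on each side, which is exactly $\gamma M+|\widehat A|-|\widehat C|$. So the real task is flexibility. Every vertex of $\widehat A\cup\widehat B$ and every colour of $\widehat C$ must be absorbed, and each such absorption is paid for by changing how many identity-coloured ($F$-)edges we use.

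\emph{Gadgets.} Using proper pseudorandomness (Definition in Appendix~\ref{app:montgomery-definitions}) and the randomness of $V,C$, I would find many disjoint small gadgets of three kinds inside $V\cup C$:
\begin{itemize}
\item a \emph{vertex switcher} for $a\in A$: a constant-size configuration that has a perfect matching using only $F$-edges and rainbow edges, both when $a$ is included and when a designated partner vertex is included instead;
\item a \emph{colour switcher} for a colour $x$: a configuration coverable either using $x$ once or using one extra $F$-edge;
\item a \emph{parity gadget}, which accounts for the single uncovered vertex per side and the single unused colour.
\end{itemize}
The key point is that $F$ misses at most $\alpha M$ edges of colour $c$ and $\alpha\llpoly\log^{-1}M$. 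A union bound over the $\exp(O(\alpha M\log M))$ choices of $F$ and over all $c$ should therefore leave enough valid gadgets through every vertex and colour. I would first prove this for a fixed $(c,F)$ with failure probability $\exp(-\Omega(M^{1-o(1)}))$, using Lemma-style concentration, and then take the union bound.

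\emph{Template.} Next, take a bipartite ``robustly matchable'' template $R$ with $O(\gamma M)$ vertices and maximum degree $O(1)$. Its defining property is a flexible set $Z$ of size about $2\eta M$ such that, for every $Z'\subseteq Z$ of the appropriate size, $R-Z'$ has a perfect matching. I would embed $R$ using the switchers, so that each template edge corresponds to a gadget choice. The leftovers $\widehat A,\widehat B,\widehat C$ are attached to $Z$ via switchers: whatever subset is present, $Z'$ is determined, and the perfect matching of $R-Z'$ tells us which state every gadget is in. Since $\widehat A,\widehat B,\widehat C$ are arbitrary, this attachment step needs, for every outside vertex and colour, many switcher options into $V$. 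This is where pseudorandomness of the whole of $G$ is used, not only typicality. Reading off the edges then gives disjoint $M_{\mathrm{id}}\subseteq F$ inside $V_{\mathrm{add}}$ and a $(C_{\mathrm{add}}\cup\widehat C)$-rainbow $M_{\mathrm{rb}}$, with sizes fixed by the counting above.

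\emph{Main obstacle.} I expect the hardest step to be building the gadgets simultaneously for all $c$ and all admissible $F$. This requires that deleting any $\alpha M$ identity edges destroys only a negligible proportion of the candidate switchers at each vertex and colour. I would try a greedy or random embedding that reserves $\Theta(\log M)$ alternatives per gadget slot, so that adversarial deletion of $\alpha M$ edges can be repaired locally.
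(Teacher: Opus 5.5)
The paper does not prove this lemma; it imports Montgomery's Theorem~3.5 as a black box. Your outline therefore has to stand on its own, and it has a genuine gap at its centre: the gadgets you plan to extract from proper pseudorandomness do not exist in general.

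Take $L$ to be the addition table of $\mathbb{Z}_n$, so that the edge $ab$ of $G(L)$ has colour $a+b$. By Proposition~\ref{prop:mont-pseudorandom}, $G(L)$ satisfies the hypotheses of the lemma. For every matching in $G(L)$, the sum in $\mathbb{Z}_n$ of the labels of its covered rows and columns equals the sum of its colours, and each $F$-edge contributes the identity colour $c$.
\begin{itemize}
\item Suppose a colour switcher covers the same vertex set either with colours $D\cup\{x\}$ plus $j$ edges of $F$, or with colours $D$ plus $j+1$ edges of $F$. Then this invariant forces $x=c$.
\item Suppose a vertex switcher exchanges $a$ for $a'$ while using the same colours. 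Then the invariant forces $a=a'$.
\item If instead the two states of a switcher use different colours, then switching also changes which colours are consumed, and your template does not track this.
\end{itemize}
So absorbing a vertex or a colour can never be ``paid for'' just by changing the number of $F$-edges, as you assert at the start.

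The same invariant shows that no fixed $V_{\mathrm{add}},C_{\mathrm{add}}$ can cover $V_{\mathrm{add}}\cup\widehat A\cup\widehat B$ perfectly with exactly the colours $C_{\mathrm{add}}\cup\widehat C$ for all admissible $\widehat A,\widehat B,\widehat C$. Already $\widehat A=\{a\}$, $\widehat B=\{b\}$, $\widehat C=\varnothing$ with $a$ varying breaks it. The single uncovered vertex on each side and the single unused colour are therefore not bookkeeping. They are what must absorb an arbitrary discrepancy, and your unspecified ``parity gadget'' is exactly where the content of the theorem lies.

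The structures that proper pseudorandomness actually supplies, namely \textnormal{(F4)}--\textnormal{(F7)} of Definition~\ref{def:proper-pseudorandom}, respect this invariant.
\begin{itemize}
\item In \textnormal{(F4)}, two $c_0$-edges on $V_i$ are traded for an exactly-$C_i$-rainbow matching on $V_i\cup\{u,v\}$. So absorbing a vertex pair creates a demand for three new colours.
\item In \textnormal{(F5)}, four $c_0$-edges are traded for the colours $C_i\cup\{d\}$.
\item In \textnormal{(F6)} and \textnormal{(F7)}, a perfect $c_0$-matching is set against a rainbow matching with one edge fewer, drawn from a prescribed palette. This is where a spare vertex per side and a spare colour appear.
\end{itemize}
A proof has to chain such coupled gadgets so that every absorption is compensated, with the final discrepancy absorbed by the spare vertex pair and colour. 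Your outline contains no version of this step.

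Two smaller points. First, the dependence on $F$, which you call the main obstacle, is comparatively routine. The sets $V_{\mathrm{add}},C_{\mathrm{add}}$ may be chosen after $(c,F)$. Each of \textnormal{(F4)}--\textnormal{(F7)} gives $\Omega(M)$ disjoint gadgets, since the constant $p^{12}/10^{100}$ there does not depend on $M$. Deleting $\alpha M=o(M)$ edges of colour $c$ therefore destroys at most $\alpha M$ gadgets per family, and no union bound over $F$ is needed. Your ``build first, repair locally'' fallback would fail anyway. Because $\gamma\llpoly\alpha$, the whole structure, even with $\Theta(\log M)$ alternatives per slot, has fewer identity edges than an adversary may delete. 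Second, attaching arbitrary outside elements to a fixed flexible set $Z$ is not supplied. These elements can be any of $M$ candidates and any number up to $\eta M$, so this step needs an explicit reservoir-and-Hall argument, together with padding so that $|Z'|$ is fixed.
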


\subsection{Counting almost-perfect matchings}
Glock, Joos, Kim, K\"uhn and Lichev~\cite{GJKL} proved a lower bound on the number of matchings that avoid certain {forbidden submatchings} $\mathcal{C}$ (called \emph{conflicts}) in near-regular $k$-uniform hypergraphs with small 2-degree. We only need the version when $\mathcal{C}=\emptyset$.

\begin{lemma}[{\cite[Theorem~3.5]{GJKL}}]\label{thm:gjkl}
For every fixed integer $k\ge2$, there is $\zeta_0>0$ such that the following holds.  For every $0<\zeta<\zeta_0$, there is $d_0=d_0(k,\zeta)$ such that every $k$-uniform hypergraph $\cH$ on $v$ vertices satisfying $d\ge d_0$ and
\[
 v\le\exp(d^{\zeta^3}),
 \qquad
 (1-d^{-\zeta})d\le\delta(\cH)\le\Delta(\cH)\le d,
 \qquad
 \Delta_2(\cH)\le d^{1-\zeta}
\]
has at least
\[
 \left(\frac{(1-d^{-\zeta^4})d}{\e^{k-1}}\right)^s
\]
matchings of size $s:=(1-d^{-\zeta^3})v/k$.
\end{lemma}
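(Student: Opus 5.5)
The plan is to prove Lemma~\ref{thm:gjkl} by analysing the \emph{random greedy matching process} on $\cH$ and turning concentration of its trajectory into a count. At each step $i=0,1,\dots$ the process picks a uniformly random edge $e_{i+1}$ of the current hypergraph $\cH_i$. Here $\cH_i$ is obtained from $\cH$ by deleting all vertices covered by $e_1,\dots,e_i$, together with every edge meeting them.

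Put $t=ik/v$, so a $t$-fraction of the vertices is covered after $i$ steps. The heuristic trajectory is that every surviving vertex has degree $d_i\approx d(1-t)^{k-1}$ in $\cH_i$. Consequently the number of edges is $|E(\cH_i)|\approx \frac{v}{k}\,d\,(1-t)^{k}$.

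The first and main step is to show that, with probability at least $1/2$ say, for all $i\le s=(1-d^{-\zeta^3})v/k$ we have
\[
|E(\cH_i)|\ge (1-d^{-2\zeta^4})\,\tfrac{v}{k}\,d\,(1-ik/v)^k .
\]
I would do this with the differential equations method, or equivalently a R\"odl nibble, tracking all vertex degrees simultaneously. The ingredients are:
\begin{itemize}
\item the hypothesis $\Delta_2(\cH)\le d^{1-\zeta}$, which bounds the one-step change of every degree by $O(d^{1-\zeta})$ and makes the expected drift of $d_{\cH_i}(x)$ match the derivative of $d(1-t)^{k-1}$ up to relative error $d^{-\Omega(\zeta)}$;
\item martingale (Freedman/Azuma) concentration for each tracked variable;
\item a union bound over at most $v\le \exp(d^{\zeta^3})$ vertices.
\end{itemize}
The initial regularity error $d^{-\zeta}$ is propagated via a self-correcting or slowly growing error function. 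This is the obstacle I expect to be hardest. Near the end, $(1-t)^{k-1}d$ has size about $d^{1-(k-1)\zeta^3}$, and the relative errors must remain small there. This is exactly why $s$ stops at the $(1-d^{-\zeta^3})$ fraction and why the hierarchy $\zeta^4\ll\zeta^3\ll\zeta$ appears. I would first try the standard critical-interval argument with error term $d^{-\zeta/3}(1-t)^{-C}$, checking that it stays below $d^{-2\zeta^4}$ up to time $s$.

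Second, I convert this into a count. Call an ordered sequence $(e_1,\dots,e_s)$ \emph{good} if the bound above holds at every step. The probability that the process produces a given good sequence is $\prod_i |E(\cH_i)|^{-1}\le \prod_i L_i^{-1}$, where $L_i$ is the right-hand side of the displayed bound. Hence the number of good ordered sequences is at least $\tfrac12\prod_{i<s}L_i$. Each $s$-edge matching arises from at most $s!$ orderings, so the number of matchings is at least $\tfrac12\prod_{i<s} L_i/s!$.

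Finally, I evaluate this product. Write
\[
\prod_{i<s}(1-ik/v)^k=\exp\Bigl(k\sum_{i<s}\log(1-ik/v)\Bigr).
\]
Comparing the sum with the integral $\tfrac vk\int_0^{1-\delta}\log(1-x)\,dx$ for $\delta=d^{-\zeta^3}$ gives
\[
\prod_{i<s}(1-ik/v)^k=\exp\bigl(-v+O(v\delta\log(1/\delta))\bigr)=\e^{-ks(1+o_\delta)}.
\]
Using Stirling, $s!\le (s/\e)^s\cdot O(\sqrt s)$, together with $v/k=s(1+O(\delta))$, the count becomes
\[
\left(\frac{(1-O(d^{-2\zeta^4}+\delta\log(1/\delta)))\,d}{\e^{k-1}}\right)^{s}.
\]
This is at least $\bigl((1-d^{-\zeta^4})d/\e^{k-1}\bigr)^s$ once $d\ge d_0$. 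The factor $\tfrac12$ and the polynomial factors are absorbed because $s\ge v/(2k)$ is large.
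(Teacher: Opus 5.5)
The paper does not prove this lemma. It is imported from \cite[Theorem~3.5]{GJKL} with an empty conflict system. Your plan is essentially the argument behind that cited result, whose main tool is this same random greedy process analysed by the differential equations method: track the random greedy matching process down to a leftover fraction $d^{-\zeta^3}$, then turn the bounds $|E(\cH_i)|\ge L_i$ into at least $\tfrac12\prod_{i<s}L_i/s!$ matchings.

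Your closing computation is sound. Since $\log(1-x)$ is decreasing, the sum-versus-integral comparison gives $\prod_{i<s}(1-ik/v)^k\ge \e^{-ks}$ outright. Absorbing the factor $\tfrac12$ and the $O(\sqrt s)$ from Stirling only needs $s\,d^{-\zeta^4}\gg\log s$. This holds because the degree and codegree hypotheses force $v\ge(k-1)(1-d^{-\zeta})d^{\zeta}$, a justification you should state rather than just saying $s$ is large.
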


\subsection{Chernoff's inequality and entropy facts}

We use the following standard form of Chernoff's inequality.

\begin{lemma}
Let $Y=\sum_{i=1}^r Y_i$ and $\mu=\EE Y$, where the $Y_i$ are independent Bernoulli random variables.  For every $0\le t\le\mu$,
\[
 \PP(|Y-\mu|\ge t)\le 2\exp\left(-\frac{t^2}{3\mu}\right).
\]
\end{lemma}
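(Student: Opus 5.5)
The statement to prove is the Chernoff inequality: for a sum $Y$ of independent Bernoulli variables with mean $\mu$ and $0\le t\le\mu$, we have $\PP(|Y-\mu|\ge t)\le 2\exp(-t^2/(3\mu))$. I would use the exponential moment method and bound the two tails separately, writing $t=\delta\mu$ with $0\le\delta\le1$. The case $\mu=0$ is trivial, since then $Y=0$ almost surely.

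For the upper tail, let $p_i=\EE Y_i$ and fix $\lambda>0$. By independence and $1+x\le \e^x$,
\[
\EE \e^{\lambda Y}=\prod_i\bigl(1+p_i(\e^\lambda-1)\bigr)\le \exp\bigl(\mu(\e^\lambda-1)\bigr).
\]
Markov's inequality then gives $\PP(Y\ge(1+\delta)\mu)\le \exp\bigl(\mu(\e^\lambda-1)-\lambda(1+\delta)\mu\bigr)$. Choosing $\lambda=\log(1+\delta)$ yields the classical bound
\[
\PP(Y\ge(1+\delta)\mu)\le\left(\frac{\e^{\delta}}{(1+\delta)^{1+\delta}}\right)^{\mu}.
\]
For the lower tail, apply the same argument to $\e^{-\lambda Y}$ with $\lambda=-\log(1-\delta)$. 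This gives
\[
\PP(Y\le(1-\delta)\mu)\le\left(\frac{\e^{-\delta}}{(1-\delta)^{1-\delta}}\right)^{\mu},
\]
where the case $\delta=1$ is handled by continuity or directly.

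The only genuine work is the two elementary inequalities on $[0,1]$:
\[
f(\delta):=\delta-(1+\delta)\log(1+\delta)\le-\frac{\delta^2}{3},
\qquad
g(\delta):=-\delta-(1-\delta)\log(1-\delta)\le-\frac{\delta^2}{2}\le-\frac{\delta^2}{3}.
\]

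For $g$, expand in a Taylor series: $(1-\delta)\log(1-\delta)=-\delta+\sum_{k\ge2}\frac{\delta^k}{k(k-1)}$. Hence $g(\delta)=-\sum_{k\ge2}\delta^k/(k(k-1))\le-\delta^2/2$.

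For $f$, set $h(\delta)=f(\delta)+\delta^2/3$. Then $h(0)=0$ and $h'(\delta)=-\log(1+\delta)+2\delta/3$, so $h'(0)=0$. Also $h''(\delta)=-1/(1+\delta)+2/3$, which is negative for $\delta<1/2$ and positive afterwards. So $h'$ first decreases from $0$ and then increases. Since $h'(1)=2/3-\log 2<0$, we get $h'\le0$ on $[0,1]$, hence $h\le0$.

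Combining, each tail is at most $\exp(-\delta^2\mu/3)=\exp(-t^2/(3\mu))$, and a union bound gives the factor $2$. I expect no real obstacle. The only point needing care is the upper-tail calculus inequality. This is exactly where the restriction $t\le\mu$ (that is, $\delta\le1$) is used, since the constant $3$ fails for large $\delta$.
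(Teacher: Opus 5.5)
Your proof is correct. The paper gives no proof of this lemma and simply quotes it as a standard form of Chernoff's inequality; your exponential-moment argument, with the elementary inequalities for the upper and lower tails on $[0,1]$, is exactly the standard proof behind that citation.
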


We use the following elementary properties of information entropy. The \emph{entropy} of a discrete random variable $Y$ is defined as $\HH(Y):=-\sum_y p_y\log p_y$, where $p_y= \PP(Y=y)$. 
If $Z$ is another discrete random variable, define the conditional entropy 
$\HH(Y\mid Z):=\sum_z\PP(Z=z)\HH(Y\mid Z=z)$.

\begin{fact}\label{lem:entropy-facts}
Let $Y,Y_1,\ldots,Y_r,Z$ be discrete random variables. Then, we have the following facts.
\begin{enumerate}
\item[\textnormal{(i)}] If $Y$ is uniform on a set $\Omega$, then $\HH(Y)=\log|\Omega|$.
\item[\textnormal{(ii)}] The chain rule gives
\[
 \HH(Y_1,\ldots,Y_r)=\sum_{i=1}^r\HH(Y_i\mid Y_1,\ldots,Y_{i-1}).
\]
\item[\textnormal{(iii)}] Suppose that, whenever $\PP(Z=z)>0$, there are at most $N(z)$ possible values of $Y$ after conditioning on $Z=z$.  Then
\[
 \HH(Y\mid Z)\le \EE\log N(Z).
\]
\item[\textnormal{(iv)}] If $U$ is a positive random variable, then for every $z$ with $\PP(Z=z)>0$,
\[
 \EE[\log U\mid Z=z]\le \log\EE[U\mid Z=z],
\]
whenever the expectations are finite.
\end{enumerate}
\end{fact}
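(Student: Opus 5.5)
These four facts are standard consequences of the definition $\HH(Y)=-\sum_y p_y\log p_y$ and the concavity of $\log$. I would verify them in order, reusing (i) and Jensen's inequality for the later items.

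For (i), if $Y$ is uniform on $\Omega$ then $p_y=1/|\Omega|$ for every $y\in\Omega$, and $p_y=0$ otherwise. Hence
\[
\HH(Y)=-\sum_{y\in\Omega}\frac{1}{|\Omega|}\log\frac{1}{|\Omega|}=\log|\Omega| .
\]

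For (ii), I would first prove the two-variable identity $\HH(Y_1,Y_2)=\HH(Y_1)+\HH(Y_2\mid Y_1)$.
\begin{itemize}
\item Write $\PP(Y_1=y_1,Y_2=y_2)=\PP(Y_1=y_1)\,\PP(Y_2=y_2\mid Y_1=y_1)$ whenever $\PP(Y_1=y_1)>0$.
\item Take $-\log$ of both sides and average over the joint law.
\item The first term gives $\HH(Y_1)$. The second term gives $\sum_{y_1}\PP(Y_1=y_1)\HH(Y_2\mid Y_1=y_1)$, which is exactly the definition of $\HH(Y_2\mid Y_1)$.
\end{itemize}
The general chain rule then follows by induction on $r$. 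Apply the two-variable case to the pair $\bigl((Y_1,\dots,Y_{r-1}),Y_r\bigr)$ and use the induction hypothesis for $\HH(Y_1,\dots,Y_{r-1})$.

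For (iii), the key step is the bound $\HH(W)\le\log|\operatorname{supp}W|$ for any random variable $W$ with finite support.
\begin{itemize}
\item Write $\HH(W)=\sum_w p_w\log(1/p_w)$, summing over the support.
\item Apply Jensen's inequality to the concave function $\log$. This gives $\HH(W)\le\log\sum_w p_w\cdot(1/p_w)=\log|\operatorname{supp}W|$.
\item Apply this to $Y$ conditioned on $Z=z$: since $Y$ then takes at most $N(z)$ values, $\HH(Y\mid Z=z)\le\log N(z)$.
\item Average over $z$ with weights $\PP(Z=z)$ to obtain $\HH(Y\mid Z)\le\EE\log N(Z)$.
\end{itemize}

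For (iv), this is Jensen's inequality for the concave function $\log$ on $(0,\infty)$, applied to the conditional law of $U$ given $Z=z$. The hypothesis that the expectations are finite ensures both sides are well defined. If $\EE[\log U\mid Z=z]=-\infty$, the inequality is trivial.

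None of these steps poses a real obstacle. The only points needing care are:
\begin{itemize}
\item using the convention $0\log 0=0$ and restricting all sums to outcomes of positive probability;
\item checking, in (iv), that Jensen's inequality applies to possibly infinite-support discrete laws. This can be done directly from the tangent-line inequality $\log u\le\log\mu+(u-\mu)/\mu$ with $\mu=\EE[U\mid Z=z]$, taking conditional expectations.
\end{itemize}
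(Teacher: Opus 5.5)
Your proposal is correct and takes the same approach as the paper: (i) and (ii) are the standard computations the paper takes for granted, (iii) is the maximum-entropy bound $\HH(W)\le\log|\operatorname{supp}W|$ applied to each conditional law and averaged over $z$, and (iv) is Jensen's inequality for the concave function $\log$. You simply supply the details the paper leaves implicit, namely Jensen for the maximum-entropy bound and the tangent-line inequality for (iv).
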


Part~\textnormal{(iii)} follows from the fact that, among distributions on at most $N$ possible values, the entropy is at most $\log N$.  Part~\textnormal{(iv)} is Jensen's inequality for the concave function $\log x$.

\section{The completion reservoir}\label{sec:reservoir}

The following lemma gives a completion reservoir that can be chosen before we count matchings.

\begin{lemma}\label{lem:reservoir}
Let $M^{-1}\ll p\le1$ and $M^{-1}\llpoly\varepsilon\llpoly\eta\llpoly\beta\llpoly\log^{-1}M$.
Let $G$ be an $(M,p,\varepsilon)$-properly pseudorandom bipartite graph with classes $A,B$ and exactly $M$ colors.  Choose $V_0\subseteq A\cup B$ and $C_0\subseteq C(G)$ by including each element independently with probability $1/4$.  With probability $1-o(1)$, there are sets
\[
 U_A\subseteq A\cap V_0,\qquad U_B\subseteq B\cap V_0,\qquad U_C\subseteq C_0,
 \qquad |U_A|=|U_B|=|U_C|=\beta M+1,
\]
such that every matching $P$ in $\cH(G)\bigl[(A\setminus U_A)\cup(B\setminus U_B)\cup( C(G)\setminus U_C)\bigr]$ that leaves at most $\eta M$ elements uncovered in each of the three classes can be extended to a matching with $M-1$ edges in $\cH(G)$.
\end{lemma}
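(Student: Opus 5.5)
We split $V_0$ and $C_0$ at random into two independent halves, $V_0=V^{\mathrm{abs}}\cup V^{\mathrm{add}}$ and $C_0=C^{\mathrm{abs}}\cup C^{\mathrm{add}}$, each element being placed in either half with probability $1/2$. Then $V^{\mathrm{abs}},V^{\mathrm{add}}$ are $1/8$-random vertex sets and $C^{\mathrm{abs}},C^{\mathrm{add}}$ are $1/8$-random color sets. We apply Lemma~\ref{lem:mont-abs} to $(V^{\mathrm{abs}},C^{\mathrm{abs}})$ and Lemma~\ref{lem:mont-add} to $(V^{\mathrm{add}},C^{\mathrm{add}})$, each with $q_V=q_C=1/8$. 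We choose auxiliary constants $\eta\llpoly\gamma\llpoly\beta'\llpoly\alpha\llpoly\log^{-1}M$, where $\beta'$ is the parameter $\beta$ of Lemma~\ref{lem:mont-abs}. The final reservoir size $\beta M+1$ will be the total size of both structures, so $\beta$ is determined by $\gamma$ and $\beta'$ up to the exact bookkeeping below. With probability $1-o(1)$ both conclusions hold.

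We then fix an identity color $c\in C^{\mathrm{abs}}$ that is good for Lemma~\ref{lem:mont-abs}. Since $|C^{\mathrm{abs}}|\approx M/8\gg\alpha M$, such a color exists. This gives $F_c$ with $|E_c(G[V^{\mathrm{abs}}])\setminus F_c|\le\alpha M$.

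To feed Lemma~\ref{lem:mont-add}, which requires $|E_c(G)\setminus F|\le\alpha M$, we pass to $F:=F_c\cap E_c(G[V^{\mathrm{add}}])$. The two halves are disjoint and the identity edges used by the addition structure lie inside $V_{\mathrm{add}}$, so there is a mismatch: $F_c$ lives in $G[V^{\mathrm{abs}}]$ while $M_{\mathrm{id}}$ needs edges inside $V^{\mathrm{add}}$. I expect this interface to be the main obstacle. To resolve it, I would apply both lemmas to the same random pair $(V_0,C_0)$ and reserve disjointness through the forbidden sets $V',C'$ of Lemma~\ref{lem:mont-abs}. Concretely:
\begin{enumerate}
\item[(a)] Take $V=V_0$ and $C=C_0$ in both lemmas, with probability $1/4$.
\item[(b)] Pick a good color $c\in C_0$ with its set $F_c$.
\item[(c)] Apply Lemma~\ref{lem:mont-add} with $F=F_c$, using that $|E_c(G)\setminus F_c|\le\alpha M+|E_c(G)\setminus E_c(G[V_0])|$. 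This is only small if $F$ may be large. Lemma~\ref{lem:mont-add} quantifies over all $F$ with $|E_c(G)\setminus F|\le\alpha M$, so I would enlarge $F_c$ by all color-$c$ edges outside $V_0$ and then intersect $M_{\mathrm{id}}$ back. More likely, I would check the precise form in \cite{MontgomeryRBS}, where $F_c$ is taken inside $E_c(G)$ with complement small after restricting to $V$, and use that version.
\item[(d)] Obtain $V_{\mathrm{add}}$, $C_{\mathrm{add}}$ of sizes $2\gamma M+1$ and $\gamma M+1$, and let $E:=M_{\mathrm{id}}$-candidates, namely the edges of $F_c$ inside $V_{\mathrm{add}}$.
\item[(e)] Apply the property from Lemma~\ref{lem:mont-abs} with $E$ consisting of those $\ell_1\le 2\gamma M+1$ color-$c$ edges of $F_c$ in $V_{\mathrm{add}}$, which span all of $V_{\mathrm{add}}$ after pruning, with forbidden sets $V'=V_{\mathrm{add}}$ and $C'=C_{\mathrm{add}}\cup\{c\}$, and with $\ell_0=\gamma M+\text{const}$ fixed in advance.
\end{enumerate}

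Fixing $\ell_0$ is essential, so we pad to force $|\widehat A|-|\widehat C|$ to be constant. Both $P$ and the target have controlled sizes. Since the reservoir has equal sizes in the three classes and the ambient classes all have size $M$, a matching $P$ on the complement leaves the same number $t\le\eta M$ of vertices uncovered in $A$ and in $B$, and the same number of colors. Hence $|\widehat A|=|\widehat B|=|\widehat C|=t$, so $|M_{\mathrm{id}}|=\gamma M$ is independent of $P$. We set $\ell_0=\gamma M$ and choose $U_A,U_B$ to be $V_{\mathrm{abs}}\cup V_{\mathrm{add}}$ minus $V(E)$ appropriately, and $U_C=C_{\mathrm{abs}}\cup C_{\mathrm{add}}$. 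Then $\beta M+1=\beta' M+\gamma M+1$, and the vertex counts agree: $(\beta'M-\gamma M)+(2\gamma M+1)=\beta'M+\gamma M+1$.

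For the completion, given $P$ we set $\widehat A,\widehat B,\widehat C$ to be the uncovered leftovers. Lemma~\ref{lem:mont-add} then yields $M_{\mathrm{id}}\subseteq E$ with $|M_{\mathrm{id}}|=\gamma M$, together with a rainbow matching $M_{\mathrm{rb}}$ of $\gamma M+t$ edges. We absorb $V(M_{\mathrm{id}})$ with $C_{\mathrm{abs}}$, giving $\beta'M$ edges. The resulting matching $P\cup M_{\mathrm{rb}}\cup(\text{absorber})$ has $(M-t-\beta'M-\gamma M-1)+(\gamma M+t)+\beta'M=M-1$ edges, using distinct colors; the color $c$ itself is not used.

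The remaining care is twofold. First, the identity color $c$ must lie in $U_C$ or be otherwise excluded from $P$; this is handled by placing $c$ among the $M-(\beta M+1)$ colors and adjusting sizes by one. Second, the vertices of $V_{\mathrm{add}}$ not in $V(M_{\mathrm{id}})\cup V(M_{\mathrm{rb}})$ are exactly the two uncovered vertices, which accounts for the $M-1$.
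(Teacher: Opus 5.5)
Your final route is exactly the paper's proof, and you were right to abandon the split into halves: apply both lemmas to the same $(V_0,C_0)$, pick a good absorption color $c$, and enlarge $F_c$ by all color-$c$ edges not inside $V_0$ to feed Lemma~\ref{lem:mont-add}. Then take $E$ to be the edges of $F_c$ inside $V_{\mathrm{add}}$ (these contain every possible $M_{\mathrm{id}}$), absorb with $V'=V_{\mathrm{add}}$, $C'=C_{\mathrm{add}}$ and $\ell_0=\gamma M$, and use the reservoir $V_{\mathrm{add}}\cup V_{\mathrm{abs}}$, $C_{\mathrm{add}}\cup C_{\mathrm{abs}}$ with $\beta=\beta'+\gamma$.

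Only bookkeeping needs repair. Since $|E|$ may be as large as $2\gamma M+1$ while Lemma~\ref{lem:mont-abs} requires $\ell_1\le\gamma M$, you must invoke that lemma with $3\gamma$ in place of $\gamma$, as the paper does. You also need $|E|\ge\gamma M=\ell_0$, which follows by running the addition property with empty leftovers. Do not remove $V(E)$ from the reservoir; your own size count already uses all of $V_{\mathrm{add}}$. Finally, the identity color $c$ needs no special treatment and no size adjustment, because $M_{\mathrm{id}}$ is discarded after absorption and $P$ may freely use $c$.
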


\begin{proof}
Choose auxiliary parameters $\gamma,\alpha$ so that $\varepsilon\llpoly\eta\llpoly\gamma\llpoly\beta\llpoly\alpha\llpoly\log^{-1}M$, and put $\beta_0:=\beta-\gamma$.  Thus $\gamma\llpoly\beta_0\llpoly\alpha$.
Since $|C_0|\sim\operatorname{Bin}(M,1/4)$ and $\alpha<1/8$ for all sufficiently large $M$, Chernoff's inequality gives
\[
 \PP(|C_0|\le\alpha M)
 \le \PP\bigl(\bigl||C_0|-M/4\bigr|\ge M/8\bigr)
 \le 2\exp(-M/48)=o(1).
\]
Apply Lemma~\ref{lem:mont-abs} with $3\gamma$ in place of $\gamma$ and $\beta_0$ in place of $\beta$, and apply Lemma~\ref{lem:mont-add} with parameter $\gamma$, taking $V=V_0,C=C_0$, $q_V=q_C=1/4$ in both lemmas.  Then $|C_0|>\alpha M$ and the following two properties hold simultaneously with probability $1-o(1)$.

\begin{enumerate}
\item[\textnormal{(P1)}] For all but at most $\alpha M$ colors $c\in C_0$, there is a set $F_c\subseteq E_c(G)$ with $|E_c(G[V_0])\setminus F_c|\le\alpha M$.  For every such color, the set $F_c$ has the following property.
  \begin{enumerate}
  \item[\textnormal{(i)}] If $0\le\ell_0\le\ell_1\le3\gamma M$, $E\subseteq F_c$ has size $\ell_1$, and $V'\subseteq V(G)$ and $C'\subseteq C(G)$ satisfy $|V'|,|C'|\le30\gamma M$, then there are sets $V_{\mathrm{abs}}\subseteq V_0\setminus\bigl(V(E)\cup V'\bigr)$ and $C_{\mathrm{abs}}\subseteq C_0\setminus C'$ with $|A\cap V_{\mathrm{abs}}|=|B\cap V_{\mathrm{abs}}|=\beta_0 M-\ell_0$ and $|C_{\mathrm{abs}}|=\beta_0 M$ such that, for every $E'\subseteq E$ of size $\ell_0$, the graph $G[V_{\mathrm{abs}}\cup V(E')]$ has a $C_{\mathrm{abs}}$-rainbow matching with $\beta_0 M$ edges.
  \end{enumerate}

\item[\textnormal{(P2)}] For every $c\in C(G)$ and every $E_0\subseteq E_c(G)$ with $|E_c(G)\setminus E_0|\le\alpha M$, there are sets $V_{\mathrm{add}}\subseteq V_0$ and $C_{\mathrm{add}}\subseteq C_0$ with the following properties.
  \begin{enumerate}
  \item[\textnormal{(ii)}] We have $|A\cap V_{\mathrm{add}}|=|B\cap V_{\mathrm{add}}|=2\gamma M+1$ and $|C_{\mathrm{add}}|=\gamma M+1$.
  \item[\textnormal{(iii)}] Whenever $\widehat A\subseteq A\setminus V_{\mathrm{add}}$, $\widehat B\subseteq B\setminus V_{\mathrm{add}}$, and $\widehat C\subseteq C(G)\setminus C_{\mathrm{add}}$ satisfy $|\widehat A|=|\widehat B|\le\eta M$ and $|\widehat C|\le\eta M$, the graph $G[V_{\mathrm{add}}\cup\widehat A\cup\widehat B]$ contains vertex-disjoint matchings $M_{\mathrm{id}}$ and $M_{\mathrm{rb}}$ such that $M_{\mathrm{id}}\subseteq E_0$, $V(M_{\mathrm{id}})\subseteq V_{\mathrm{add}}$, and $|M_{\mathrm{id}}|=\gamma M+|\widehat A|-|\widehat C|$, while $M_{\mathrm{rb}}$ is $(C_{\mathrm{add}}\cup\widehat C)$-rainbow and has $|C_{\mathrm{add}}\cup\widehat C|-1$ edges.
  \end{enumerate}
\end{enumerate}

Since $|C_0|>\alpha M$, choose $c_0\in C_0$ and a corresponding set $F_{c_0}$ as in \textnormal{(P1)}.  Put $E_0:=F_{c_0}\cup\bigl(E_{c_0}(G)\setminus E(G[V_0])\bigr)$.  Then $E_{c_0}(G)\setminus E_0=E_{c_0}(G[V_0])\setminus F_{c_0}$ has size at most $\alpha M$, so \textnormal{(P2)} gives sets $V_{\mathrm{add}},C_{\mathrm{add}}$ satisfying \textnormal{(ii)} and \textnormal{(iii)}.

Apply \textnormal{(iii)} with $\widehat A=\widehat B=\widehat C=\varnothing$.  The set $E_0$ contains a matching of size $\gamma M$ whose endpoints all lie in $V_{\mathrm{add}}$.  Let $E_1$ be the set of all edges in $E_0$ whose two endpoints lie in $V_{\mathrm{add}}$.  Then $|E_1|\ge\gamma M$.  Since $E_1\subseteq E_{c_0}(G)$ and every color class is a matching, $E_1$ is a matching in $G[V_{\mathrm{add}}]$.  Hence, by \textnormal{(ii)}, $|E_1|\le |A\cap V_{\mathrm{add}}|=2\gamma M+1$.  Since $V_{\mathrm{add}}\subseteq V_0$, every edge of $E_1$ lies in $E(G[V_0])$.  The definition of $E_0$ therefore gives $E_1\subseteq F_{c_0}$.  Thus $\gamma M\le |E_1|\le2\gamma M+1\le3\gamma M$ for sufficiently large $M$.

Take $E=E_1$, $\ell_0=\gamma M$, $\ell_1=|E_1|$, $V'=V_{\mathrm{add}}$, and $C'=C_{\mathrm{add}}$.  The inequality $\gamma M\le |E_1|\le3\gamma M$ gives $\ell_0\le\ell_1\le3\gamma M$, while \textnormal{(ii)} gives $|V_{\mathrm{add}}|=4\gamma M+2\le30\gamma M$ and $|C_{\mathrm{add}}|=\gamma M+1\le30\gamma M$.  Hence \textnormal{(P1)(i)} gives sets $V_{\mathrm{abs}}\subseteq V_0\setminus\bigl(V(E_1)\cup V_{\mathrm{add}}\bigr)$ and $C_{\mathrm{abs}}\subseteq C_0\setminus C_{\mathrm{add}}$ such that $|A\cap V_{\mathrm{abs}}|=|B\cap V_{\mathrm{abs}}|=(\beta-2\gamma)M$ and $|C_{\mathrm{abs}}|=(\beta-\gamma)M$.  These sets have the following property:

\begin{enumerate}
\item[\textnormal{(iv)}] For every matching $M'\subseteq E_1$ of size $\gamma M$, the graph $G[V_{\mathrm{abs}}\cup V(M')]$ has a $C_{\mathrm{abs}}$-rainbow matching with $(\beta-\gamma)M$ edges.
\end{enumerate}

Set $U_A:=A\cap(V_{\mathrm{add}}\cup V_{\mathrm{abs}})$, $U_B:=B\cap(V_{\mathrm{add}}\cup V_{\mathrm{abs}})$, and $U_C:=C_{\mathrm{add}}\cup C_{\mathrm{abs}}$.  We have $V_{\mathrm{add}}\subseteq V_0$ and $V_{\mathrm{abs}}\subseteq V_0\setminus V_{\mathrm{add}}$, while $C_{\mathrm{add}}\subseteq C_0$ and $C_{\mathrm{abs}}\subseteq C_0\setminus C_{\mathrm{add}}$.  Hence $U_A\subseteq A\cap V_0$, $U_B\subseteq B\cap V_0$, and $U_C\subseteq C_0$.  The size formulas above show that $|U_A|=|U_B|=|U_C|=\beta M+1$.

Fix a matching $P$ as in the lemma.  Let $\widehat A,\widehat B,\widehat C$ be the elements left uncovered by $P$ in $A\setminus U_A$, $B\setminus U_B$, and $ C(G)\setminus U_C$, respectively.  The three sets $A\setminus U_A$, $B\setminus U_B$, and $C(G)\setminus U_C$ all have size $M-\beta M-1$, and $P$ covers $|P|$ elements in each.  Thus $\widehat A,\widehat B,\widehat C$ have the same size $\ell=M-\beta M-1-|P|\le\eta M$.  Statement \textnormal{(iii)} gives vertex-disjoint matchings $M_{\mathrm{id}}$ and $M_{\mathrm{rb}}$.  We have $M_{\mathrm{id}}\subseteq E_0$, $V(M_{\mathrm{id}})\subseteq V_{\mathrm{add}}$, and $|M_{\mathrm{id}}|=\gamma M$, while $M_{\mathrm{rb}}$ is $(C_{\mathrm{add}}\cup\widehat C)$-rainbow with $\gamma M+\ell$ edges.  Since $M_{\mathrm{id}}\subseteq E_0$ and $V(M_{\mathrm{id}})\subseteq V_{\mathrm{add}}$, the definition of $E_1$ gives $M_{\mathrm{id}}\subseteq E_1$.  Statement \textnormal{(iv)} therefore gives a $C_{\mathrm{abs}}$-rainbow matching $M_{\mathrm{abs}}$ of size $(\beta-\gamma)M$ in $G[V_{\mathrm{abs}}\cup V(M_{\mathrm{id}})]$.

Regard $P$ as the corresponding rainbow matching in $G$.  The matching $M_{\mathrm{rb}}$ has all its vertices in $V_{\mathrm{add}}\cup\widehat A\cup\widehat B$, whereas $M_{\mathrm{abs}}$ has all its vertices in $V_{\mathrm{abs}}\cup V(M_{\mathrm{id}})$.  The matching $P$ avoids $V_{\mathrm{add}}\cup V_{\mathrm{abs}}$ and leaves $\widehat A\cup\widehat B$ uncovered, so it is disjoint from both matchings.  By \textnormal{(iii)}, the matching $M_{\mathrm{rb}}$ is disjoint from $M_{\mathrm{id}}$.  We also have $V_{\mathrm{abs}}\cap V_{\mathrm{add}}=\varnothing$.  Since $V_{\mathrm{abs}}\subseteq U_A\cup U_B$, while $\widehat A\subseteq A\setminus U_A$ and $\widehat B\subseteq B\setminus U_B$, we have $V_{\mathrm{abs}}\cap(\widehat A\cup\widehat B)=\varnothing$.  Thus $M_{\mathrm{rb}}$ and $M_{\mathrm{abs}}$ are disjoint, and the three matchings are pairwise vertex-disjoint.

The matchings $P,M_{\mathrm{rb}},M_{\mathrm{abs}}$ use colors in $( C(G)\setminus U_C)\setminus\widehat C$, $C_{\mathrm{add}}\cup\widehat C$, and $C_{\mathrm{abs}}$, respectively.  These three sets are pairwise disjoint because $U_C=C_{\mathrm{add}}\cup C_{\mathrm{abs}}$ and $C_{\mathrm{add}}\cap C_{\mathrm{abs}}=\varnothing$.  Consequently their union is a rainbow matching in $G$ containing $P$, and its size is
\[
 \bigl(M-\beta M-1-\ell\bigr)+(\gamma M+\ell)+(\beta-\gamma)M=M-1.
\]
It therefore corresponds to the required matching in $\cH(G)$.
\end{proof}
\section{The general lower bound}\label{sec:lower}

The following theorem gives a lower bound on the number of near-spanning rainbow matchings in a properly pseudorandom bipartite graph, and it will be applied to Latin squares and Steiner triple systems. 

\begin{theorem}\label{thm:lowerbound}
Fix $0<p_0\le1$ and a sufficiently small $a>0$.  There are constants $\sigma,C>0$ such that the following holds for all sufficiently large $M$.  Let $p\in[p_0,1]$, and let $G$ be an $(M,p,M^{-a})$-properly pseudorandom bipartite graph with classes $A,B$ and exactly $M$ colors.  Then $G$ has at least
\[
 \exp\bigl(M\log(pM)-2M-CM^{1-\sigma}\bigr)
\]
rainbow matchings with $M-1$ edges.  
\end{theorem}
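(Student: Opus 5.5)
We follow the overview: fix the random sets, count near-perfect matchings in what remains, trim, complete, and bound the multiplicity of the map. Choose $\beta,\eta$ as small powers of $M$ (say $\eta=M^{-b}$, $\beta=M^{-b'}$ with $a$ small compared to $b,b'$), so that Lemma~\ref{lem:reservoir} and Lemma~\ref{lem:mont-finish} both apply with $\varepsilon=M^{-a}$. Sample $V_0,C_0$ with probability $1/4$ and, independently, sets $A',B',C'$ with probability $q$ of order $M^{-b''}$ (with $\eta\llpoly q$), as required in Lemma~\ref{lem:mont-finish}. Since each event holds with probability $1-o(1)$, and Chernoff plus the pseudorandomness of $G$ control degrees, we may fix outcomes such that the following hold simultaneously:
\begin{itemize}
\item[(a)] the reservoir $U_A,U_B,U_C$ of Lemma~\ref{lem:reservoir} exists;
\item[(b)] the conclusion of Lemma~\ref{lem:mont-finish} holds for $A',B',C'$ (disjoint from $U$);
\item[(c)] the hypergraph $\cH^*:=\cH(G)$ restricted to $(A\setminus(U_A\cup A'))\cup(B\setminus(U_B\cup B'))\cup(C(G)\setminus(U_C\cup C'))$ has all degrees $(1\pm M^{-\zeta})pM(1-O(q+\beta))$.
\end{itemize}
Property (c) should follow from typicality: each vertex has about $pM$ edges, and random deletion of a $q$-fraction in two other classes, together with a small deterministic deletion of size $\beta M$, keeps the degrees concentrated. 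A few exceptional high-deviation vertices may need to be discarded into the leftover as well. The 2-degree is at most $1$ by linearity.

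Apply Lemma~\ref{thm:gjkl} with $k=3$ and $d=(1+O(q+\beta))pM$ to $\cH^*$, which has $v\approx 3M(1-q-\beta)$ vertices. This gives at least $((1-d^{-\zeta^4})d/\e^2)^s$ matchings $P$ of size $s=(1-d^{-\zeta^3})v/3$. Since $s=M-O(M^{1-\sigma'})$, the count is
\[
\exp\bigl(M\log(pM)-2M-O(M^{1-\sigma})\bigr),
\]
where the loss from $s<M$ costs $O(M^{1-\sigma'}\log M)$ in the exponent, which is absorbed. Each such $P$ leaves at most $d^{-\zeta^3}M+O((q+\beta)M)$ elements uncovered per class.

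Next, complete each $P$. The set $\widehat A:=A\setminus(U_A\cup V(P))$ contains $A'$, and similarly for $B$ and $C$. One wants $|\widehat A|$ equal to some target $qM$, and the class sizes must balance; if they do not, remove a few edges of $P$ arbitrarily, which costs only a subexponential factor. Then Lemma~\ref{lem:mont-finish} gives a matching $Q$ on $\widehat A\cup\widehat B\cup\widehat C$ leaving at most $\eta M$ per class uncovered. So $P\cup Q$ avoids the reservoir and leaves at most $\eta M$ elements per class, and Lemma~\ref{lem:reservoir} extends it to an $(M-1)$-edge matching $\Phi(P)$.

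The multiplicity bound is where I expect the main difficulty. Given a final matching $R$ with $M-1$ edges, $P$ is a subset of $R$ of size $s$, whose edges avoid the reservoir and the sets $A',B',C'$. The number of choices is at most $\binom{M}{M-s}=\exp(O(M^{1-\sigma'}\log M))$. Hence the number of distinct $R$ is at least the count above divided by this factor, which is still $\exp(M\log(pM)-2M-CM^{1-\sigma})$.

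The delicate points are these:
\begin{itemize}
\item[(i)] Making the hierarchy $M^{-1}\llpoly\varepsilon\llpoly\eta\llpoly q,\beta\llpoly\log^{-1}M$ consistent with polynomial parameters, so that all losses are $M^{1-\sigma}$. Here $q$ sits below $\log^{-1}M$ but must also satisfy $\eta\llpoly q$; since Lemma~\ref{lem:mont-finish} requires $\eta\llpoly q$, take $q=M^{-b''}$ with $b''<b$.
\item[(ii)] The degree concentration in (c) with the error $M^{-\zeta}$ demanded by Lemma~\ref{thm:gjkl}. This is the step I would check most carefully, using the typicality codegree conditions from the Appendix.
\end{itemize}
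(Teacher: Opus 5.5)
Your overall architecture is the paper's: fix the reservoir and the random sets, count with Lemma~\ref{thm:gjkl}, trim, complete via Lemmas~\ref{lem:mont-finish} and~\ref{lem:reservoir}, and bound the multiplicity. However, the completion step fails as written. Since $|A\setminus U_A|=|B\setminus U_B|=|C(G)\setminus U_C|$, the three sets left uncovered by $P$ automatically have equal size, so balancing is not the issue. Their \emph{size} is. Each $\widehat A$ consists of $A'$, which has size about $qM$ because you sample at rate exactly $q$, plus the leftover of Lemma~\ref{thm:gjkl}, which has size about $d^{-\zeta^3}M$. So $|\widehat A|$ exceeds the target $qM$. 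Deleting edges of $P$ only enlarges $\widehat A$, so Lemma~\ref{lem:mont-finish}, which needs $|\widehat A|=qM$ exactly, cannot be invoked.

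Nor can the excess be handed to the reservoir. Deleting the deterministic reservoir can shift degrees by up to $2\beta M$, so near-regularity in Lemma~\ref{thm:gjkl} requires $\beta M\le d^{1-\zeta}$. Hence $\eta M\ll\beta M\lesssim M^{1-\zeta}\ll M^{1-\zeta^3}$.

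The missing idea is to make the random sets polynomially larger than the counting leftover, and to sample them with slack. The paper takes $q=M^{-\delta}$ with $\zeta^4<\delta<\zeta^3$, so the leftover $u=O(M^{1-\zeta^3})$ is $o(qM)$. It samples the random sets at rate $\theta=3q/4$, which is allowed since $2q/3\le q_A\le q$. It then deletes $h=qM-s_0-u\approx qM/4$ edges (not ``a few''), where $s_0\approx 3qM/4$ is the common size of the slightly enlarged random sets. After this each uncovered set has exactly $qM$ elements and contains its random set. Your hierarchy in (i) contains neither $q\gg M^{-\zeta^3}$ nor $\beta\ll M^{-\zeta}$, and these are the constraints that make the argument close.

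Three further points need fixing.
\begin{itemize}
\item \textbf{Disjointness from the reservoir.} Sampling $A',B',C'$ independently of $V_0,C_0$ does not make them disjoint from the reservoir. The set $U_A\subseteq V_0$ will typically meet $A'$, and then $\widehat A\supseteq A'$ contradicts the requirement in Lemma~\ref{lem:reservoir} that $P\cup Q$ avoid $U_A$. The paper samples inside $A\setminus V_0$, $B\setminus V_0$ and $C(G)\setminus C_0$ with probability $q$. This yields independent $\theta$-random sets with $\theta=3q/4$, which is the same device that creates the slack above.
\item \textbf{Multiplicity after trimming.} Your multiplicity paragraph treats $P$ as a subset of the final matching $R$, which fails once you trim; only the trimmed matching $P_0$ lies in $R$. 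You must count the choices of $P_0\subseteq R$ and then the $h$ deleted edges among at most $M^2$ edges. With $r_0=\beta M+1$, this gives at most $\binom{M-1}{r_0+qM-1}\binom{M^2}{h}=\exp(O(qM\log M))$ preimages, which is still negligible.
\item \textbf{Degree concentration in (c).} No codegree information is needed here. By linearity, the edges at a fixed vertex survive the random deletion independently. This gives degrees $(1-\theta)^2d_{\cH(G)}(x)\pm M^{2/3}$, with a factor common to all vertices. Then $\zeta<a$ and $\beta M=o(M^{1-\zeta})$ keep the spread $o(d^{1-\zeta})$.
\end{itemize}
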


\begin{proof}
Let $\zeta_0$ be the constant in Lemma~~\ref{thm:gjkl} for $k=3$.  Choose sufficiently small
$0<\zeta<\min\{\zeta_0,a,1/3\}$ and then choose $\zeta^4<\delta<\zeta^3$.  Next choose $\eta,\beta$ as fixed negative powers of $M$ so that
\begin{equation}\label{eq:parameter-hierarchy}
 M^{-1}\llpoly M^{-a}
 \llpoly\eta
 \llpoly\beta
 \llpoly M^{-\zeta}
 \llpoly M^{-\delta}
 \llpoly\log^{-1}M.
\end{equation}
In particular, \eqref{eq:parameter-hierarchy} gives $\beta M=o(M^{1-\zeta})$.  Put $Q:=M^{1-\delta}$, $q:=Q/M$, and $\theta:=3q/4$.

Choose $V_0\subseteq A\cup B$ and $C_0\subseteq C(G)$ independently, with each element included with probability $1/4$.  Conditional on $V_0,C_0$, include each element of $A\setminus V_0$, $B\setminus V_0$, and $ C(G)\setminus C_0$ independently with probability $q$, obtaining $W_A,W_B,W_C$, respectively. 
Equivalently, the elements of $W_A,W_B,W_C$ were selected in two rounds, first with probability $3/4$, second with probability $q$.
Thus, $W_A,W_B,W_C$ are independent random subsets of $A,B,C(G)$, respectively, with each element included with probability $\theta=3q/4$.

We first prove the concentration estimates for the sizes of these sets.  Since $\theta M= \frac{3}{4}M^{1-\delta}$ and $\delta<1/3$, we have $M^{2/3}\le\theta M$ as $M$ is sufficiently large. Chernoff's inequality therefore gives
\[
 \PP\bigl(\bigl||W_A|-\theta M\bigr|\ge M^{2/3}\bigr)
 \le 2\exp\bigl(-\Omega(M^{1/3+\delta})\bigr).
\]
The same bound holds for $W_B,W_C$.  Applying Chernoff's inequality to $A\cap V_0$, $B\cap V_0$, and $C_0$ gives failure probability at most $2\exp(-\Omega(M^{1/3}))$ for each of the analogous estimates with mean $M/4$.  The union bound over these six events shows that the estimates
\begin{align}
 |W_A|,|W_B|,|W_C|&=\theta M+O(M^{2/3}),\label{eq:w-set-sizes}\\
 |A\cap V_0|,|B\cap V_0|,|C_0|&=M/4+O(M^{2/3})\label{eq:v0-c0-sizes}
\end{align}
hold simultaneously with probability $1-o(1)$.

Let $W:=W_A\cup W_B\cup W_C$.  Fix $x\in V(\cH(G))$ and condition on the event $x\notin W$.  Distinct edges of $\cH(G)$ containing $x$ use pairwise distinct vertices in each of the other two coordinate classes, because $\cH(G)$ is linear.  Hence the indicators that these edges survive the deletion of $W$ are independent Bernoulli variables, each with success probability $(1-\theta)^2$.  Since $G$ is $(M,p,M^{-a})$-properly pseudorandom, \textnormal{(F2)} in Definition~\ref{def:proper-pseudorandom} states $d_{\cH(G)}(x)=(1\pm M^{-a})pM$.  Thus, for every $x\notin W$, 
\[
 {\EE}\bigl[d_{\cH(G)-W}(x)\bigr]
 =(1-\theta)^2d_{\cH(G)}(x)
 =(1-\theta)^2(1\pm M^{-a})pM
\]
Since $\theta=O(M^{-\delta})$, $\delta<a$ and $p\in[p_0,1]$, it follows that 
${\EE}\bigl[d_{\cH(G)-W}(x)\bigr] =(1+O(M^{-\delta}))pM=\Theta(M)$.
 In particular, this expectation is at least $M^{2/3}$ as $M$ is sufficiently large. Chernoff's inequality, with deviation $M^{2/3}$, therefore gives
\[
 \PP\left(\left|d_{\cH(G)-W}(x)-(1-\theta)^2d_{\cH(G)}(x)\right|\ge M^{2/3}\,\middle|\,x\notin W\right)
 \le 2\exp\bigl(-\Omega(M^{1/3})\bigr).
\]
Summing these probabilities for at most $3M$ vertices shows that all $x\notin W$ simultaneously satisfy
\begin{equation}\label{eq:degree-after-W}
 d_{\cH(G)-W}(x)=(1-\theta)^2d_{\cH(G)}(x)\pm M^{2/3}
\end{equation}
with probability $1-o(1)$.

We now apply Lemmas~\ref{lem:mont-finish} and~\ref{lem:reservoir}.  By \textnormal{(F2)} of proper pseudorandomness, $\cH(G)$ is $(M,p,M^{-a})$-typical.  Since $p\ge p_0$ and $q=M^{-\delta}$, \eqref{eq:parameter-hierarchy} gives $M^{-1}\llpoly M^{-a}\llpoly\eta\llpoly p,q$, while $2q/3\le\theta\le q$.  Hence Lemma~\ref{lem:mont-finish} applies to $\cH(G)$ with $A'=W_A$, $B'=W_B$, $C'=W_C$, and $q_A=q_B=q_C=\theta$.  The condition $M^{-1}\ll p$ follows from $p\ge p_0$, and the hierarchy gives all the polynomial conditions of Lemma~\ref{lem:reservoir}, so that lemma applies to $V_0,C_0$.

A union bound shows that \eqref{eq:w-set-sizes}, \eqref{eq:v0-c0-sizes}, \eqref{eq:degree-after-W}, and the following two properties hold simultaneously with probability $1-o(1)$.

\begin{enumerate}
\item[\textnormal{(P1)}] For every $\widehat A\subseteq A$, $\widehat B\subseteq B$, and $\widehat C\subseteq C(G)$ satisfying $|\widehat A|=|\widehat B|=|\widehat C|=Q$, $W_A\subseteq\widehat A$, $W_B\subseteq\widehat B$, and $W_C\subseteq\widehat C$, the hypergraph $\cH(G)[\widehat A\cup\widehat B\cup\widehat C]$ contains a matching with at least $Q-\eta M$ edges.

\item[\textnormal{(P2)}] There are sets $U_A\subseteq A\cap V_0$, $U_B\subseteq B\cap V_0$, and $U_C\subseteq C_0$ with $|U_A|=|U_B|=|U_C|=\beta M+1$ such that every matching $P$ in $\cH(G)[(A\setminus U_A)\cup(B\setminus U_B)\cup(C(G)\setminus U_C)]$ that leaves at most $\eta M$ elements uncovered in each of the three classes can be extended to a matching with $M-1$ edges in $\cH(G)$.
\end{enumerate}

Fix a realization for which \eqref{eq:w-set-sizes}--\eqref{eq:degree-after-W}, and properties \textnormal{(P1)} and \textnormal{(P2)} hold, and henceforth regard $V_0,C_0,W_A,W_B,W_C$ as deterministic.  Fix corresponding sets $U_A,U_B,U_C$ from \textnormal{(P2)}, and put $r_0:=|U_A|=|U_B|=|U_C|=\beta M+1$.

Put
\[
 s_0:=\max\{|W_A|,|W_B|,|W_C|\}.
\]
By \eqref{eq:w-set-sizes} and \eqref{eq:v0-c0-sizes}, for each $X\in\{A,B,C\}$ we may enlarge $W_X$ to a set $\midtilde{W}_X$ of size $s_0$ by adding $O(M^{2/3})$ elements outside $V_0$ when $X\in\{A,B\}$, and outside $C_0$ when $X=C$.  Define
\[
 A^*:=A\setminus(U_A\cup\midtilde{W}_A),\quad
 B^*:=B\setminus(U_B\cup\midtilde{W}_B),\quad
 C^*:= C(G)\setminus(U_C\cup\midtilde{W}_C).
\]
The three sets have common size $N:=M-r_0-s_0$.  Let $\cH^*:=\cH(G)[A^*\cup B^*\cup C^*]$.

The passage from $\cH(G)-W$ to $\cH^*$ deletes $r_0+O(M^{2/3})$ vertices from each coordinate class. 
Fix $x\in V(\cH^*)$.   By linearity, the degree of $x$ therefore decreases by at most $2r_0+O(M^{2/3})$.  Combining this with \eqref{eq:degree-after-W} and $d_{\cH(G)}(x)=(1\pm M^{-a})pM$, we obtain that
\begin{equation}\label{eq:residual-degrees}
 d_{\cH^*}(x)
 =(1-\theta)^2pM\pm O\bigl(M^{1-a}+r_0+M^{2/3}\bigr)
 =(1-\theta)^2pM\pm o(M^{1-\zeta}),
\end{equation}
where the last equality follows because $\zeta<a$, $\zeta<1/3$, and \eqref{eq:parameter-hierarchy} gives $r_0=o(M^{1-\zeta})$.  Set $d:=\Delta(\cH^*)$.  Since $p\ge p_0$, it follows that $d=\Theta(M)$ and $\Delta(\cH^*)-\delta(\cH^*)=o(M^{1-\zeta})=o(d^{1-\zeta})$.  Consequently, 
\[
 (1-d^{-\zeta})d\le\delta(\cH^*)\le\Delta(\cH^*)\le d.
\]
Since $M$ is sufficiently large, $d=\Theta(M)$ and $|V(\cH^*)|=3N=O(M)$, we have $d\ge d_0(3,\zeta)$ and $|V(\cH^*)|\le\exp(d^{\zeta^3})$ for sufficiently large $M$.  Since $\cH^*$ is linear, $\Delta_2(\cH^*)\le1\le d^{1-\zeta}$.

Applying Lemma~~\ref{thm:gjkl} to $\cH^*$ with $k=3$, we obtain at least
\[
 Z:=\left(\frac{(1-d^{-\zeta^4})d}{\e^2}\right)^{m_0}
 \quad\text{matchings of size}\quad
 m_0:=(1-d^{-\zeta^3})N.
\]
Let $u:=N-m_0$.  Since $d=\Theta(M)$, $N=O(M)$ and $\delta<\zeta^3$, we have $u=d^{-\zeta^3}N=O(M^{1-\zeta^3})=o(Q)$.  Equation~\eqref{eq:w-set-sizes} and the definition of $s_0$ give $s_0=3Q/4+O(M^{2/3})=3Q/4+o(Q)$, while \eqref{eq:parameter-hierarchy} and $\delta<\zeta$ give $r_0=o(Q)$.  Hence $m_0=N-u=(M-r_0-s_0)-u=M\pm O(Q)$.

Since $p\ge p_0$, $\theta=O(M^{-\delta})$ and $\delta<\zeta$, \eqref{eq:residual-degrees} gives $\log d=\log(pM)\pm O(M^{-\delta})$.  Since $d=\Theta(M)$, we also have $|\log(1-d^{-\zeta^4})|= O(M^{-\zeta^4})$.  Substituting these estimates gives
\begin{equation}\label{eq:log-Z}
\begin{aligned}
 \log Z
 &=m_0\bigl(\log d-2+\log(1-d^{-\zeta^4})\bigr)\\
 &=(M\pm O(Q))
   \bigl(\log(pM)-2
          \pm O(M^{-\delta})
          \pm O(M^{-\zeta^4})\bigr)\\
 &=M\log(pM)-2M\pm O(M^{1-\zeta^4}),
\end{aligned}
\end{equation}
where the last equality follows from $Q=O(M^{1-\delta})$ and $\zeta^4<\delta$.

We next delete some edges from each counted matching and extend the remaining matching to one with $M-1$ edges.  Put $h:=Q-s_0-u$.  Since $s_0=3Q/4+o(Q)$ and $u=o(Q)$, we have $h=(1/4+o(1))Q$, and hence $1\le h\le m_0$.  For each counted matching $M_0$, choose arbitrary $h$ of its edges and let $P_0$ be the remaining matching.  In each of $A\setminus U_A$, $B\setminus U_B$, and $ C(G)\setminus U_C$, the matching $P_0$ leaves exactly $(M-r_0)-|P_0|=s_0+u+h=Q=qM$ elements uncovered.  Denote these three uncovered sets by $X_A,X_B,X_C$, respectively.  Since $P_0\subseteq M_0\subseteq\cH^*$, no element of $\midtilde{W}_A\cup\midtilde{W}_B\cup\midtilde{W}_C$ is covered by $P_0$, so $X_A,X_B,X_C$ contain $W_A,W_B,W_C$, respectively.

For each counted matching $M_0$, choose a matching $M_{\mathrm f}$ with at least $Q-\eta M$ edges in $\cH(G)[X_A\cup X_B\cup X_C]$, as supplied by \textnormal{(P1)}.  Since $X_A,X_B,X_C$ are the sets left uncovered by $P_0$, the union $P_0\cup M_{\mathrm f}$ is a matching in $\cH(G)$.  It uses no element of $U_A\cup U_B\cup U_C$ and leaves exactly $Q-|M_{\mathrm f}|\le\eta M$ elements uncovered in each of $A\setminus U_A$, $B\setminus U_B$, and $C(G)\setminus U_C$.  Property \textnormal{(P2)} therefore gives an $(M-1)$-edge matching in $\cH(G)$ containing $P_0\cup M_{\mathrm f}$; choose one and denote it by $T(M_0)$.

To bound the number of distinct completed matchings, fix an $(M-1)$-edge matching $T$ in $\cH(G)$ and consider the counted matchings $M_0$ satisfying $T(M_0)=T$.  For each such $M_0$, we have $|P_0|=m_0-h=M-r_0-Q$, $P_0\subseteq T$, and $|T\setminus P_0|=r_0+Q-1$.  Hence there are at most $\binom{M-1}{r_0+Q-1}$ possibilities for $P_0$.  Once $P_0$ is fixed, $M_0$ is obtained by adjoining $h$ edges of $\cH^*$.  Since $|E(\cH^*)|\le |E(G)|\le M^2$, there are at most $\binom{M^2}{h}$ possibilities for $M_0$.  Hence the number of counted matchings $M_0$ satisfying $T(M_0)=T$ is at most
\[
 F_M:=\binom{M-1}{r_0+Q-1}\binom{M^2}{h}.
\]
Since $r_0=o(Q)$ and $h=O(Q)$, we have $\log F_M\le\log(M^{r_0+Q}M^{2h})=O(Q\log M)$.  Since $Q=O(M^{1-\delta})$ and $\delta>\zeta^4$, this is $o(M^{1-\zeta^4})$.  Thus $\cH(G)$ has at least $Z/F_M$ distinct matchings with $M-1$ edges.  Equation~\eqref{eq:log-Z} gives $\log(Z/F_M)\ge M\log(pM)-2M-O(M^{1-\zeta^4})$.  Taking $\sigma=\zeta^4$ and choosing $C=C(p_0,a)>0$ larger than the implicit constant gives the stated lower bound for matchings in $\cH(G)$.  These matchings correspond exactly to rainbow matchings in $G$, proving the theorem.
\end{proof}
\section{An entropy upper bound}\label{sec:entropy}

For a hypergraph $\cH$ and an integer $t\ge0$, let $M_t(\cH)$ denote the number of $t$-edge matchings in $\cH$.  In particular, if $\cH$ is $k$-uniform on $kt$ vertices, then $M_t(\cH)$ is the number of perfect matchings in $\cH$.
Luria~\cite[Theorem 3.1]{Luria} proved that if $\cH$ is $D$-regular with $\Delta_2(\cH)=o(D)$ on $kt$ vertices, then 
\[
M_t(\cH)\le \left( (1+o(1)) D/e^{k-1}\right)^{t}.
\]
We use the following quantitative form of this result. His argument applies with a maximum degree bound in place of regularity. We include
the proof for completeness.

\begin{theorem}\label{thm:entropy}
Fix an integer $k\ge2$.  There are constants $C_k>0$ and $\rho_k>0$ such that the following holds.  Let $t\ge1$ be an integer, let $D>0$ and $\lambda\ge0$, and let $\cH$ be a $k$-uniform hypergraph on $kt$ vertices satisfying $\Delta(\cH)\le D$ and $\Delta_2(\cH)\le\lambda$.  Put $\rho:=(1+\binom{k}{2}\lambda)/D$.
If $\rho\le\rho_k$, then
\[
 M_t(\cH)
 \le
 \left(
   \frac{D}{\e^{k-1}}
   \exp\bigl(C_kh_{\rho}\bigr)
 \right)^t,
\]
where $h_{\rho}=\rho^{1/(k-1)}$ when $k\ge 3$ and $h_{\rho}=\rho|\log\rho|$ when $k=2$.
\end{theorem}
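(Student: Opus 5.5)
We follow Luria's entropy argument, adapted to a maximum degree bound. Assume $M_t(\cH)>0$ and let $\mathbf M$ be a uniformly random perfect matching of $\cH$; write $\mathbf M(v)$ for the edge of $\mathbf M$ containing the vertex $v$. By Fact~\ref{lem:entropy-facts}(i), $\log M_t(\cH)=\HH(\mathbf M)$. Choose a uniformly random ordering of $V(\cH)$ via independent labels $\tau_v\in[0,1]$. Since $\mathbf M$ is determined by the sequence $(\mathbf M(v))_v$ revealed in increasing order of $\tau_v$, and $\mathbf M(v)$ is already known unless $v$ is the first vertex of its edge, the chain rule (Fact~\ref{lem:entropy-facts}(ii)) averaged over $\tau$ gives
\[
 \HH(\mathbf M)=\sum_{v}\EE_\tau\bigl[\HH\bigl(\mathbf M(v)\mid \mathbf M(w):\tau_w<\tau_v\bigr)\bigr],
\]
where only the vertices $v$ that come first in their edge of $\mathbf M$ contribute.

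For such a $v$, the conditional value of $\mathbf M(v)$ is an edge $e\ni v$ of $\cH$ that meets no previously revealed edge. Let $N_v$ be the number of edges $e\ni v$ avoiding all $\mathbf M(w)$ with $\tau_w<\tau_v$. By Fact~\ref{lem:entropy-facts}(iii) the $v$-term is at most $\EE[\one\{v\text{ first}\}\log N_v]$, and by Jensen's inequality (Fact~\ref{lem:entropy-facts}(iv)), conditioning on $\mathbf M$ and on $\tau_v=x$, it is at most $\EE_{\mathbf M}\int_0^1 x^{k-1}\log\EE[N_v\mid\mathbf M,\tau_v=x]\,dx$. Here $x^{k-1}$ is the probability that $v$ is first in $\mathbf M(v)$.

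Next we estimate $\EE[N_v\mid\mathbf M,\tau_v=x]$. Apart from $\mathbf M(v)$ itself, an edge $e\ni v$ survives if and only if, for each of its other $k-1$ vertices $u$, the edge $\mathbf M(u)$ has not yet been revealed, that is, no vertex of $\mathbf M(u)$ has label below $x$. Call $e$ \emph{good} if the $k-1$ edges $\mathbf M(u)$ for $u\in e\setminus\{v\}$ are distinct and all differ from $\mathbf M(v)$. For a good $e$, these edges contain $k(k-1)$ distinct vertices with independent labels, so $e$ survives with probability $(1-x)^{k(k-1)}$. A bad $e$ survives with probability at most $(1-x)^{k}$, or trivially at most $1$.

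Count bad edges via $\Delta_2$. If two vertices of $e\setminus\{v\}$ lie in a common edge $f$ of $\mathbf M$, then $e$ contains two vertices of $f$. If some $u\in e\setminus\{v\}$ lies in $\mathbf M(v)$, then $e$ contains $v$ and another vertex of $\mathbf M(v)$. Counting over the $\binom{k}{2}$ pairs inside each relevant edge of $\mathbf M$, together with the pairs involving $v$, shows that, for a fixed $v$, the bad edges are controlled on average over $v$ by roughly $\binom k2\lambda$ times a bounded factor. A cleaner route is to sum over $v$: the total number of pairs (vertex $v$, bad edge $e\ni v$) is at most $k\cdot t\binom{k}{2}\lambda\cdot k$. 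Hence
\[
 \EE[N_v\mid\cdot]\le 1+D(1-x)^{k(k-1)}+b_v,
\]
with $\sum_v b_v\le O_k(\lambda t)$. We then need the integral
\[
 \int_0^1 x^{k-1}\log\bigl(D(1-x)^{k(k-1)}+1+b_v\bigr)\,dx
 \le\frac1k\log D-\frac{k-1}{k}+\text{error}.
\]
The main term follows from $\int_0^1 x^{k-1}\log(1-x)^{k(k-1)}\,dx=-(k-1)H_k/1\cdots$, which is Luria's computation giving exactly $-(k-1)/k$ after normalization. Summing $\frac1k(\log D-(k-1))$ over all $kt$ vertices yields $t\log(D/\e^{k-1})$.

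\textbf{Main obstacle.} The hard step is the error term. Writing the integrand as
\[
 \log D+\log\bigl((1-x)^{k(k-1)}+\rho'_v\bigr),\qquad \rho'_v=(1+b_v)/D,
\]
we must bound
\[
 \int_0^1 x^{k-1}\log\Bigl(1+\frac{\rho'}{(1-x)^{k(k-1)}}\Bigr)dx .
\]
Split at $1-x=\rho'^{1/(k(k-1))}$. On the near region, use the bound $\log(1+y)\le\log 2+\log y$. The logarithmic singularity is integrable, and the two regions together contribute $O(\rho'^{1/(k(k-1))}\log)$. In fact, the sharper exponent $1/(k-1)$, or $\rho|\log\rho|$ for $k=2$, stated in Theorem~\ref{thm:entropy} arises from more careful bookkeeping of where the mass lies, possibly by using survival probability $(1-x)^{k-1}$ when refining the split. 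This function of $\rho'$ is concave, so averaging over $v$ via Jensen's inequality converts $\sum_v b_v\le O_k(\lambda t)$ into an error of $t\cdot C_kh_\rho$. Getting the exact exponent $h_\rho$ is the delicate part; the first attempt will be the split described above, tuned so that the near region has length $\rho^{1/(k-1)}$.
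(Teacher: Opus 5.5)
Your outline follows the same route as the paper: uniform random perfect matching, random labels, the chain rule restricted to the first vertex of each matching edge, good versus bad edges with the bad ones controlled by $\Delta_2$, and Jensen over $v$. However, the central computation is set up inconsistently, and this is exactly what stops you from reaching $h_\rho$. You reveal in \emph{increasing} order of $\tau$, so given $\tau_v=x$ the vertex $v$ is first in $\mathbf M(v)$ with probability $(1-x)^{k-1}$, not $x^{k-1}$. Your survival probability $(1-x)^{K}$, with $K=k(k-1)$, for good edges is correct. Pairing the weight $x^{k-1}$ with $(1-x)^{K}$ is not a valid bound, and no normalization repairs it. Its main term is $\int_0^1x^{k-1}\log(1-x)^{K}\,dx=-(k-1)H_k$, where $H_k$ is the harmonic number. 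Summing over the $kt$ vertices would then give $M_t(\cH)\le(D\e^{-k(k-1)H_k+o(1)})^t$. This is already false for $k=2$ and $\cH=K_{t,t}$, since $t!\approx(t/\e)^t$ is much larger than $(t/\e^{3})^t$. Separately, in the Jensen step you must also condition on the event that $v$ is first, i.e.\ bound $\log\EE[N_v\mid\mathbf M,\tau_v=x,\ v\text{ first}]$. This is harmless, because the $K$ vertices governing a good edge lie outside $\mathbf M(v)$.

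With the correct weight, substitute $y=1-x$ (equivalently, reveal in decreasing order of priority, as the paper does). Every edge that is bad for some $v$ meets an edge of $\mathbf M$ in at least two vertices, so $\sum_v b_v\le kt\binom k2\lambda$; your extra factor $k$ only affects constants. Jensen over $v$ then yields $\HH(\mathbf M)\le t\log D+ktJ(\rho)$, where $J(\rho)=\int_0^1y^{k-1}\log(y^K+\rho)\,dy$. Here $J(0)=K\int_0^1y^{k-1}\log y\,dy=-(k-1)/k$, which is the correct main term.

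The exponent in $h_\rho$ does not come from finer bookkeeping. It comes from the weight $y^{k-1}$ vanishing exactly where $\log(1+\rho y^{-K})$ blows up. Split at $y_0=\rho^{1/K}$. The near part equals $y_0^{k}\int_0^1z^{k-1}\log(1+z^{-K})\,dz=O(\rho^{1/(k-1)})$. The far part is at most $\rho\int_{y_0}^1y^{-(k-1)^2}\,dy$. This is $O(\rho^{1/(k-1)})$ for $k\ge3$, because $K+1-(k-1)^2=k$, and it is $\tfrac12\rho|\log\rho|$ for $k=2$.

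In your mismatched integral the singularity sits at $x=1$, where the weight is close to $1$. So $\rho^{1/K}$ is genuinely the best that integral can give, and the split you propose cannot reach $h_\rho$ as set up.
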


\begin{proof}
There is nothing to prove if $\cH$ has no perfect matching.  Otherwise let $X$ be a uniformly random perfect matching of $\cH$.  By Fact~\ref{lem:entropy-facts}\textnormal{(i)},
$\HH(X)=\log M_t(\cH)$.

For each $v\in V(\cH)$, let $X_v$ denote the edge of $X$ containing $v$.  The vector $(X_v)_{v\in V(\cH)}$ determines $X$, and conversely $X$ determines this vector.  Hence $\HH((X_v)_{v\in V(\cH)})=\HH(X)$.

Independently of $X$, choose $\tau_v$ uniformly from $[0,1]$ for each $v\in V(\cH)$, with all these choices independent.  With probability one the priorities are distinct.  Fix such a priority vector $\tau$, order the vertices in decreasing order of priority, and let
$R_v(\tau):=(X_u:\tau_u>\tau_v)$ be the variables revealed before $X_v$.  Applying the chain rule from Fact~\ref{lem:entropy-facts}\textnormal{(ii)} in this deterministic order gives
\[
 \HH(X)=\sum_{v\in V(\cH)}\HH\bigl(X_v\mid R_v(\tau)\bigr).
\]
Since the left-hand side does not depend on $\tau$, averaging this identity over the random priorities yields
\begin{equation}\label{eq:entropy-chain}
 \HH(X)=\EE_\tau\sum_{v\in V(\cH)}\HH\bigl(X_v\mid R_v(\tau)\bigr).
\end{equation}

Fix $\tau$.  For each possible value $r$ of $R_v(\tau)$, let $N_v(r,\tau)$ be the number of edges $e$ containing $v$ for which there is a perfect matching that contains $e$ and agrees with $r$ on all previously revealed variables.  Thus, after conditioning on $R_v(\tau)=r$, the random variable $X_v$ has at most $N_v(r,\tau)$ possible values.  Fact~\ref{lem:entropy-facts}\textnormal{(iii)} therefore gives
\[
 \HH\bigl(X_v\mid R_v(\tau)\bigr)
 \le \EE_X\bigl[\log N_v(R_v(\tau),\tau)\bigr].
\]
Write $N_v:=N_v(R_v(\tau),\tau)$.  Let $A_v$ be the event that $v$ has the largest priority among the $k$ vertices of $X_v$.  If $A_v$ does not occur, then some $u\in X_v\setminus\{v\}$ has $\tau_u>\tau_v$, so $X_u=X_v$ is one of the variables revealed before $X_v$.  Once the earlier variables are fixed, $X_v$ is then already determined, and hence $N_v=1$.  Using $N_v=1$ outside $A_v$, the preceding entropy bound and \eqref{eq:entropy-chain} give
\begin{equation}\label{eq:entropy-first}
 \HH(X)\le \EE_{X,\tau}\sum_{v\in V(\cH)}\one_{A_v}\log N_v.
\end{equation}

We now estimate the contribution of a fixed vertex $v$.  First fix the matching $X$.  Since $\tau_v$ is uniform on $[0,1]$, we may condition on its value and integrate over $x$:
\[
 \EE_\tau[\one_{A_v}\log N_v\mid X]
 =\int_0^1
   \EE_\tau[\one_{A_v}\log N_v\mid X,\tau_v=x] \,dx.
\]
Fix $x\in(0,1)$.  Once $X$ and $\tau_v=x$ are fixed, the event $A_v$ occurs precisely when each of the other $k-1$ vertices of $X_v$ has priority less than $x$.  These $k-1$ priorities are independent and uniform on $[0,1]$, so $\PP(A_v\mid X,\tau_v=x)=x^{k-1}$.
It follows that
\[
\begin{aligned}
 \EE_\tau[\one_{A_v}\log N_v\mid X,\tau_v=x]
 &=x^{k-1}\EE_\tau[\log N_v\mid X,\tau_v=x,A_v]\\
 &\le x^{k-1}\log\EE_\tau[N_v\mid X,\tau_v=x,A_v],
\end{aligned}
\]
where the inequality is Jensen's inequality, in the form stated in Fact~\ref{lem:entropy-facts}\textnormal{(iv)}.  Integrating this estimate over $x$, then summing over $v$ and averaging over $X$ in \eqref{eq:entropy-first}, gives
\begin{equation}\label{eq:entropy-integral}
 \HH(X)\le
 \EE_X\sum_{v\in V(\cH)}\int_0^1 x^{k-1}
 \log\EE_\tau[N_v\mid X,\tau_v=x,A_v] \,dx.
\end{equation}

We next estimate the conditional expectation inside the logarithm.  Fix a realization of $X$.  Let $\cB_X$ be the set of edges $f\in E(\cH)\setminus X$ that meet some edge of $X$ in at least two vertices.  The $t$ edges of $X$ contain $t\binom{k}{2}$ vertex pairs, and each such pair lies in at most $\lambda$ edges of $\cH$ because of $\Delta_2(\cH)\le\lambda$.  Hence
$|\cB_X|\le t\binom{k}{2}\lambda$.  If $b_v$ is the number of edges of $\cB_X$ containing $v$, then double counting incidences gives
\begin{equation}\label{eq:bad-incidences}
 \sum_{v\in V(\cH)}b_v=k|\cB_X|\le kt\binom{k}{2}\lambda.
\end{equation}

Put $K:=k(k-1)$.  Fix $v$ and condition further on $\tau_v=x$ and $A_v$.  Consider an edge $f$ containing $v$ with $f\ne X_v$ and $f\notin\cB_X$.  For each $u\in f\setminus\{v\}$, let $X_u$ be the edge of the fixed matching $X$ containing $u$.  Since $f\notin\cB_X$, these $k-1$ edges are distinct and they are also different from $X_v$.  Their union therefore consists of exactly $K=k(k-1)$ vertices, all outside $X_v$.

If any one of these $K$ vertices has priority larger than $x$, then the corresponding edge $X_u$ is revealed before $X_v$.  That revealed edge meets $f$ but is not equal to $f$, so no perfect matching agreeing with the variables already revealed can contain $f$.  Hence $f$ can remain a possible value of $X_v$, that is to say $f$ may be counted in $N_v$, only if all these $K$ priorities are below $x$.  The event $A_v$ depends only on the priorities in $X_v\setminus\{v\}$, which are disjoint from these $K$ vertices.  Thus conditioning on $A_v$ does not change their independent uniform distributions, and
\[
 \PP(f\text{ remains possible}\mid X,\tau_v=x,A_v)\le x^K.
\]

For every such edge $f$, let $I_f$ be the indicator that $f$ remains a possible value of $X_v$ after the earlier variables have been revealed.  The actual edge $X_v$ contributes one possible value, and the edges of $\cB_X$ through $v$ contribute at most $b_v$ additional values.  Consequently,
\[
 N_v\le 1+b_v+
 \sum_{\substack{f\ni v\\ f\ne X_v,\ f\notin\cB_X}} I_f.
\]
Taking conditional expectations and using linearity of expectation, together with the preceding probability bound and $d_{\cH}(v)\le D$, gives
\begin{equation}\label{eq:support-expectation}
 \EE_\tau[N_v\mid X,\tau_v=x,A_v]\le 1+b_v+Dx^K.
\end{equation}

For each fixed $X$ and $x$, Jensen's inequality applied to the sum over $v$ gives
\[
\begin{aligned}
 \sum_{v\in V(\cH)}\log(1+b_v+Dx^K)
 &\le kt\log\left(\frac1{kt}\sum_{v\in V(\cH)}(1+b_v+Dx^K)\right)\\
 &=kt\log\left(1+\frac1{kt}\sum_{v\in V(\cH)}b_v+Dx^K\right)\\
 &\le kt\log\left(1+\binom{k}{2}\lambda+Dx^K\right),
\end{aligned}
\]
where the last inequality is due to \eqref{eq:bad-incidences}. Note the final expression is independent of the sampled matching $X$. Therefore, substituting \eqref{eq:support-expectation} into \eqref{eq:entropy-integral} gives 
\[
 \HH(X)\le \EE_X\int_0^1 x^{k-1}
 \sum_{v\in V(\cH)}\log(1+b_v+Dx^K) \,dx \le \int_0^1x^{k-1}\cdot
 kt\log\left(1+\binom{k}{2}\lambda+Dx^K\right)\,dx.
\]
Since $1+\binom{k}{2}\lambda+Dx^K=D(\rho+x^K)$ and $kt\int_0^1x^{k-1}\,dx=t$, this becomes
\begin{equation}\label{eq:entropy-J}
 \HH(X)\le t\log D+ktJ(\rho),
 \qquad
 J(\rho):=\int_0^1x^{k-1}\log(x^K+\rho)\,dx.
\end{equation}

It remains to estimate $J(\rho)$.  Differentiating $\int_0^1x^{a-1}\,dx=1/a$ at $a=k$ gives $\int_0^1x^{k-1}\log x\,dx=-1/k^2$.  Since $K=k(k-1)$, we have $J(0)=-(k-1)/k$.  For $\rho>0$,
\[
 J(\rho)-J(0)=\int_0^1x^{k-1}\log(1+\rho x^{-K})\,dx.
\]
We may assume that $\rho_k\le\e^{-1}$.  Put $x_0:=\rho^{1/K}$.  On $[0,x_0]$, the substitution $x=x_0y$ gives
\[
 \int_0^{x_0} x^{k-1}\log(1+\rho x^{-K})\,dx
 =x_0^k\int_0^1y^{k-1}\log(1+y^{-K})\,dy
 =O_k(\rho^{1/(k-1)}).
\]
The integral in $y$ is finite because its integrand is $O_k(y^{k-1}|\log y|)$ near zero.  On $[x_0,1]$, the inequality $\log(1+z)\le z$ gives
\[
 \int_{x_0}^1x^{k-1}\log(1+\rho x^{-K})\,dx
 \le\rho\int_{x_0}^1x^{-(k-1)^2}\,dx.
\]
The last expression is $O_k(\rho|\log\rho|)$ when $k=2$ and $O_k(\rho^{1/(k-1)})$ when $k\ge3$.  Since $|\log\rho|\ge1$ for $0<\rho\le\rho_k$, we conclude that
\[
 J(\rho)=-\frac{k-1}{k}
 +O_k\bigl(h_{\rho}\bigr).
\]
Substituting this estimate into \eqref{eq:entropy-J} gives
\[
 \HH(X)\le t\left(\log D-(k-1)
 +O_k\bigl(h_{\rho}\bigr)\right).
\]
Since $\HH(X)=\log M_t(\cH)$, choosing $C_k$ to dominate the implicit constant gives the asserted bound.
\end{proof}

It is easy to deduce from Theorem~\ref{thm:entropy} an upper bound for $M_t(\cH)$ for any $t\le |V(\cH)|/k$.

\begin{corollary}\label{cor:entropy-matchings}
Fix an integer $k\ge2$.  There are constants $C_k>0$ and $\rho_k>0$ such that the following holds.  Let $D>0$, and let $\cH$ be a $k$-uniform hypergraph on $N$ vertices with $\Delta(\cH)\le D$.  Let $s\ge0$ and $t\ge1$ be integers satisfying $N-s=kt$.  Put $\rho_{\cH}:=(1+\binom{k}{2}\Delta_2(\cH))/D$.  If $\rho_{\cH}\le\rho_k$, then
\[
 M_t(\cH)
 \le
 \binom Ns
 \left(
   \frac{D}{\e^{k-1}}
   \exp\bigl(C_kh_{\rho}\bigr)
 \right)^t,
\]
where $h_{\rho}=\rho_{\cH}^{1/(k-1)}$ when $k\ge 3$ and $h_{\rho}=\rho_{\cH}|\log\rho_{\cH}|$ when $k=2$.
\end{corollary}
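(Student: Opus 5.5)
We reduce to Theorem~\ref{thm:entropy} by passing to induced subhypergraphs on exactly $kt$ vertices. Every $t$-edge matching $X$ of $\cH$ covers exactly $kt=N-s$ vertices, so it leaves a set $S\subseteq V(\cH)$ of exactly $s$ vertices uncovered. Grouping the $t$-edge matchings by this uncovered set gives
\[
 M_t(\cH)=\sum_{\substack{S\subseteq V(\cH)\\ |S|=s}} M_t\bigl(\cH[V(\cH)\setminus S]\bigr),
\]
where each summand counts perfect matchings of the induced subhypergraph $\cH_S:=\cH[V(\cH)\setminus S]$ on $kt$ vertices.

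For each $S$, we check that $\cH_S$ satisfies the hypotheses of Theorem~\ref{thm:entropy} with the same parameters. Passing to an induced subhypergraph can only delete edges. Hence $\Delta(\cH_S)\le\Delta(\cH)\le D$ and $\Delta_2(\cH_S)\le\Delta_2(\cH)$. Take $\lambda:=\Delta_2(\cH)$; then the quantity $\rho$ of Theorem~\ref{thm:entropy} equals $\rho_{\cH}\le\rho_k$. So with the same constants $C_k,\rho_k$ as in Theorem~\ref{thm:entropy}, each summand is at most
\[
 \left(\frac{D}{\e^{k-1}}\exp\bigl(C_kh_{\rho}\bigr)\right)^t,
\]
a bound that does not depend on $S$.

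Summing this uniform bound over the $\binom Ns$ choices of $S$ gives the corollary. The only point needing care is that $\rho$ in the theorem is defined from the chosen $\lambda$ rather than from the actual 2-degree of $\cH_S$. Fixing $\lambda=\Delta_2(\cH)$ for every $S$ makes $\rho$ and $h_\rho$ identical across all summands. There is no real obstacle here.
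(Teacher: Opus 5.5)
Your proposal is correct and matches the paper's proof: group the $t$-edge matchings by their uncovered $s$-set $U$, apply Theorem~\ref{thm:entropy} to each $\cH-U$ with the same $D$ and with $\lambda=\Delta_2(\cH)$, and sum over the $\binom Ns$ choices of $U$. Your remark that fixing $\lambda=\Delta_2(\cH)$ makes $\rho$ and $h_\rho$ the same for every summand is exactly the point the paper relies on.
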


\begin{proof}
Every $t$-edge matching has a unique uncovered set $U$ of size $s$ and is a perfect matching of $\cH-U$.  Therefore $M_t(\cH)=\sum_{U\subseteq V(\cH),\,|U|=s}M_t(\cH-U)$.  For every $U$, we have $\Delta(\cH-U)\le D$ and $\Delta_2(\cH-U)\le\Delta_2(\cH)$.  Apply Theorem~\ref{thm:entropy} to each summand with the same $D$ and with $\lambda=\Delta_2(\cH)$, and then sum over the $\binom Ns$ choices of $U$.
\end{proof}
\section{Latin squares}\label{sec:latin}

Let $L$ be a Latin square of order $n$, with row set $A$, column set $B$, and symbol set $C$.  Define a properly edge-colored copy $G(L)$ of $K_{n,n}$ on $A\cup B$ by coloring the edge $ab$ with the symbol in cell $(a,b)$.  Its associated hypergraph $\cH(L):=\cH(G(L))$ has vertex classes $A,B,C$.  Every vertex of $\cH(L)$ has degree $n$, and $\cH(L)$ is linear.  A set of cells is a partial transversal of $L$ if and only if the corresponding edges form a rainbow matching in $G(L)$, or equivalently the corresponding hyperedges form a matching in $\cH(L)$.

\
\begin{proof}[Proof of Theorem~\ref{thm:latin-main}]
\emph{Lower bound.}
Fix a sufficiently small constant $0<a<1$, and choose $\eta$ and sufficiently large $n$ so that $1/n\llpoly\eta\llpoly n^{-a}$.  The graph $G(L)$ is a properly colored copy of $K_{n,n}$ with exactly $n$ colors, and every color appears on exactly $n$ edges.  Proposition~\ref{prop:mont-pseudorandom} therefore implies that $G(L)$ is $(n,1,n^{-a})$-properly pseudorandom. 

We now apply Theorem~\ref{thm:lowerbound} with $M=n$ and $p_0=p=1$.  Denote the constants supplied by the theorem by $\sigma,C>0$; they are independent of $n$ and $L$.
The theorem gives
\[
 \log T_{n-1}(L)
 \ge n\log\frac{n}{\e^2}-Cn^{1-\sigma}.
\]

Fix $0<c<\min\{\sigma,1/2\}$ for the rest of the proof, including the upper bound.  Since  $n$ is sufficiently large, we have $-Cn^{1-\sigma}\ge -n^{1-c}$.  Since $\log(1-n^{-c})\le -n^{-c}$, we also have $-n^{1-c}\ge n\log(1-n^{-c})$.  Combining these inequalities with the preceding lower bound gives
\[
 \log T_{n-1}(L)
 \ge n\log\left((1-n^{-c})\frac{n}{\e^2}\right).
\]

\emph{Upper bound.}
Every $(n-1)$-transversal leaves a unique row, column, and symbol unused.  Fix $(x,y,z)\in A\times B\times C$.  The partial transversals with precisely these unused elements correspond exactly to the perfect matchings of $\cH(L)-\{x,y,z\}$.  This is a $3$-uniform hypergraph on $3(n-1)$ vertices with maximum degree at most $n$ and maximum 2-degree at most one.

Apply Theorem~\ref{thm:entropy} with $k=3$, $t=n-1$, $D=n$, and $\lambda=1$.  Its parameter satisfies $\rho=(1+\binom32)/n=4/n\le\rho_3$.  Let $C_3>0$ be the constant supplied by the theorem, then the theorem gives
\[
 M_{n-1}\bigl(\cH(L)-\{x,y,z\}\bigr)
 \le \left(\frac{n}{\e^2}
 \exp\left( C_3 \Big(\frac{4}{n}\Big)^{1/2}\right)\right)^{n-1}
 \le \left(\frac{n}{\e^2}
 \exp\bigl(2C_3n^{-1/2}\bigr)\right)^{n}.
\]
Summing this bound over the $n^3$ choices of $(x,y,z)$ gives an upper bound for $T_{n-1}(L)$. Taking logarithms gives
\[
 \log T_{n-1}(L)
 \le n\log\frac{n}{\e^2}+2C_3n^{1/2}+3\log n.
\]

Since $c<1/2$, we have $2C_3n^{1/2}+3\log n\le\tfrac12 n^{1-c}$ as $n$ is sufficiently large.  The inequality $\log(1+x)\ge x/2$ for $0\le x\le1$ gives $\tfrac12 n^{1-c}\le n\log(1+n^{-c})$.  Consequently,
\[
 \log T_{n-1}(L)
 \le n\log\left((1+n^{-c})\frac{n}{\e^2}\right).
 \qedhere
\]
\end{proof}
\section{Steiner triple systems}\label{sec:sts}

Let $S$ be a Steiner triple system of order $n$, and put $m=\lfloor n/3\rfloor$.  Since $n\equiv1,3\pmod6$, either $n=3m$ or $n=3m+1$.  If $n=3m$, set $X:=\varnothing$; if $n=3m+1$, fix an arbitrary vertex $x\in V(S)$ and set $X:=\{x\}$.  In either case, put
\begin{align}\label{eq:V0}
     V^\circ:=V(S)\setminus X, \qquad |V^\circ|=3m.
\end{align}

For an ordered tripartition $\Pi=(A,B,C)$ of $V^\circ$, define the properly edge-colored bipartite graph $G_\Pi$ with classes $A,B$ by placing an edge $ab$ of color $c\in C$ precisely when $\{a,b,c\}\in E(S)$.  Thus rainbow matchings in $G_\Pi$ correspond to matchings of $S$ that avoid $X$ and whose edges meet each part of $\Pi$ once.

The major part of the proof of \cite[Theorem~1.6]{MontgomeryRBS} is showing that if $\Pi=(A, B, C)$ is random, then,  with positive probability, $G_\Pi$ is $(m, 1/3, \varepsilon)$-properly pseudorandom for some $\varepsilon>0$.\footnote{It refers to \cite[Section 6]{KPSY} for the proofs of \textnormal{(F1)} and \textnormal{(F2)}.} 
In particular,  
Properties~\textnormal{(F2)}--\textnormal{(F7)} hold with high probability -- this is summarized in the following lemma.

\begin{lemma}[{\cite[proof of Theorem~1.6, pp.~63--68]
{MontgomeryRBS}}]
\label{lem:pseudorandom-random-tripartitions}
Let $0<\varepsilon\le1$ satisfy $\varepsilon\ge(3m)^{-1/8}$. 
Let $S$ be a Steiner triple system of order $n$ with $V^\circ$ defined in \eqref{eq:V0}.
Form an ordered tripartition $\Pi=(A,B,C)$ of $V^\circ$ by
assigning each vertex independently and uniformly to $A,B,C$.
Then, the probability that $G_\Pi$ fails at least one of
 \textnormal{(F2)}--\textnormal{(F7)} of Definition~\ref{def:proper-pseudorandom},
 with parameters $(m,1/3,\varepsilon)$, is $o(n^{-3})$.
\end{lemma}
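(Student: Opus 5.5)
We need to prove: for random tripartition, $G_\Pi$ satisfies (F2)--(F7) with parameters $(m,1/3,\varepsilon)$ except with probability $o(n^{-3})$. We don't see the definitions (appendix), but the statement is attributed to Montgomery's proof. My plan is to verify each property by concentration plus union bounds, going through \cite{MontgomeryRBS} and \cite[Section 6]{KPSY} and tracking the failure probabilities explicitly.

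The first step handles the degree-type conditions in (F2). For a vertex $v\in V^\circ$, the $(n-1)/2$ triples of $S$ through $v$ are pairwise disjoint outside $v$. Suppose $v$ is assigned to $A$. Then its degree in $\cH(G_\Pi)$ is the number of triples $\{v,u,w\}$ with $\{u,w\}$ split as one vertex in $B$ and one in $C$. These events involve disjoint vertex pairs, so they are independent, each with probability $2/9$. The expected degree is therefore $\frac{2}{9}\cdot\frac{n-1}{2}\approx\frac{n}{9}\approx\frac{m}{3}$, which matches $pM$ with $p=1/3$ and $M=m$. The class sizes $|A|,|B|,|C|$ are binomial with mean $m$, so they are $m\pm O(\sqrt{m\log m})$ except with probability $n^{-\omega(1)}$. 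Chernoff's inequality then gives relative deviation $m^{-1/2+o(1)}\le\varepsilon$ (since $\varepsilon\ge(3m)^{-1/8}$), with failure probability $\exp(-\Omega(m^{3/4}))$. A union bound over the $n$ vertices leaves a total failure probability of $n\exp(-\Omega(m^{3/4}))=o(n^{-3})$. If the definition demands $|A|=|B|=|C|=m$ exactly, I would instead condition on the balanced event, or rebalance by moving $O(\sqrt{m\log m})$ vertices, which affects each degree by only that much.

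The second step covers the remaining properties (F3)--(F7). These are codegree-type or local-count conditions: they count edges or colors between specified small vertex/color sets, or count short rainbow structures such as paths and cycles of bounded length through fixed vertices. For each such count, I would write it as a function of the independent assignments of vertices to parts. Changing a single vertex's part alters the count by at most $O(\Delta)$, where $\Delta$ is the local degree of that vertex within the structure being counted. McDiarmid's bounded-differences inequality, or Kim--Vu polynomial concentration for counts of bounded-size structures, then gives concentration around the mean, which equals the corresponding count in $S$ scaled by $3^{-O(1)}$. This holds with failure probability $\exp(-m^{\Omega(1)})$ at deviation $\varepsilon$ times the mean. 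The condition $\varepsilon\ge(3m)^{-1/8}$ leaves room for polynomial losses. Each property quantifies over only $n^{O(1)}$ tuples, so the union bound still gives $o(n^{-3})$.

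The main obstacle is the step for properties that quantify over large families of sets rather than bounded tuples. Here union bounds over $2^{\Theta(m)}$ sets fail with polynomial-strength concentration. My first approach would be to reduce each such property to vertex- and pair-level statistics proven in the earlier steps, through a deterministic counting argument as in \cite[Section 6]{KPSY}. For example, a statement about edges between arbitrary large sets would follow from codegree bounds via an expander-mixing-style estimate. Only the polynomially many local statistics then need probabilistic control. In this reduction I would carefully check that the error parameter degrades only polynomially, which is exactly what the $(3m)^{-1/8}$ threshold allows.
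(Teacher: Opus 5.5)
The paper does not prove this lemma itself; it imports it from Montgomery's proof of his Theorem~1.6, with \textnormal{(F2)} taken from Section~6 of Keevash--Pokrovskiy--Sudakov--Yepremyan. Your outline has the right overall shape: independent assignment, concentration, and a union bound over polynomially many prescribed tuples.

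Your treatment of class sizes and vertex degrees in \textnormal{(F2)} is fine once you take the deviation to be $\varepsilon\mu$. This gives failure probability $\exp(-\Omega(\varepsilon^2m))\le\exp(-\Omega(m^{3/4}))$; a relative deviation of $m^{-1/2+o(1)}$ does not give that bound. You still owe the codegree clause $|N(u)\cap N(v)|=(1\pm\varepsilon)p^2M$ in all three shadow graphs. There the indicators are dependent, so you need bounded differences rather than independence.

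For \textnormal{(F3)}--\textnormal{(F7)} there is a genuine gap. Concentration only says a count is close to its mean, and you never show the mean is large. You need that an \emph{arbitrary} Steiner triple system contains many non-degenerate copies of each required configuration, with all forced third vertices distinct from one another and from the prescribed vertices and colours. This is a deterministic, STS-specific statement, and it is exactly what the cited pages have to supply.

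It is not automatic. In \textnormal{(F6)} the colour set $\bar C$ is prescribed, so only $\Theta(n)$ candidates exist, and particular colour choices can be entirely degenerate. For instance, in $\mathrm{PG}(r,2)$, if $\{c_0,\bar c_1,\bar c_2\}$ is a triple, then every walk alternating through colours $c_0,\bar c_1,c_0,\bar c_2$ returns to its second vertex. So the slack $|\bar C|\ge 5k$ must be used to choose colours that avoid such coincidences.

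Second, \textnormal{(F4)}--\textnormal{(F7)} are not counting conditions. They ask for $\alpha M$ (or $\alpha M/12$) gadgets with pairwise disjoint vertex sets and pairwise disjoint colour sets. Moreover, \textnormal{(F7)} requires a single family $(V_i,C_i)_{i\le r}$ fixed \emph{before} the quantifier over $\bar C$, with at least $\alpha^2M$ good indices for every $\bar C$. Your plan has no step producing such families.

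A workable fix goes as follows. For each prescribed tuple, exhibit in $S$ a linear-size family of pairwise vertex-disjoint candidate configurations avoiding the prescribed elements; in an STS colours are vertices, so this also gives colour-disjointness. Each candidate becomes a genuine gadget in $G_\Pi$ with probability bounded below by a constant, independently across candidates once the parts of the prescribed elements are fixed. Chernoff then gives $\alpha M$ successes with probability $1-\exp(-\Omega(n))$. For \textnormal{(F7)}, the outer family must additionally be chosen so that, for every $\bar C$, many of the sets $\bar C\cup C_i$ are non-degenerate in the above sense. Note that these properties need only constant-factor lower bounds, not $(1\pm\varepsilon)$-concentration; $\varepsilon$ enters only through \textnormal{(F2)}.

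Finally, the obstacle in your last paragraph does not arise. Every quantifier ranges over polynomially many objects: in \textnormal{(F6)} you can reduce to $|\bar C|=5k$ by monotonicity, and in \textnormal{(F7)} $|\bar C|\le 100$. So the expander-mixing reduction is unnecessary, and it does nothing for the two real difficulties above. Nor is rebalancing needed. The lemma concerns the unconditioned assignment and excludes \textnormal{(F1)*}; the paper obtains exact balance afterwards by conditioning on an event of probability $\Theta(m^{-1})$, which is why a polynomially small failure probability is required.
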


\begin{proof}[Proof of Theorem~\ref{thm:sts-main}]
\emph{Lower bound.}
Fix a sufficiently small constant $0<a\le1/8$ and let $\varepsilon=m^{-a}$, so that $\varepsilon\ge(3m)^{-1/8}$.
Assign the vertices of $V^\circ$ independently and uniformly to sets $A,B,C$, let $\Pi=(A,B,C)$ be the resulting tripartition.
Let $\mathcal B$ be the event that $G_\Pi$ fails at least one of
Properties~\textnormal{(F2)}--\textnormal{(F7)} of
Definition~\ref{def:proper-pseudorandom}.
Lemma~\ref{lem:pseudorandom-random-tripartitions} gives $\PP(\mathcal B)=o(n^{-3})$. 

Let $\mathcal E$ be the event that $|A|=|B|=|C|=m$. Let $\mathcal D$ be the event that every vertex in $C$ is a color of $G_\Pi$. If $\mathcal E$ and $\mathcal D$ hold, then $|A|=|B|=|C(G_\Pi)|=m$, so Property \textnormal{(F1)} holds.
Before conditioning on \(\mathcal E\), fix \(z\in V^\circ\) and condition on \(z\in C\).
We know that $z$ lies in $d_z:=(n-1)/2 - |X|$ edges of $S-X$ which, after removing $z$, become disjoint pairs of vertices. Let $Y_z$ be the number of these pairs whose endpoints receive the labels $A,B$ in either order.  Then $Y_z\sim \operatorname{Bin}(d_z, 2/9)$ is binomial
and $z$ is a color of $G_\Pi$ exactly when $Y_z>0$.  Hence
\[
\PP\bigl(z\in C\text{ and }Y_z=0\bigr) =\frac{1}{3}\left(\frac{7}{9}\right)^{d_z}
\le \frac{1}{3}\left(\frac{7}{9}\right)^{(n-3)/2}
\]
for all $z\in V^\circ$, and a union bound gives $\PP(\mathcal D^c)=o(n^{-3})$.
Furthermore, we know that  
\[
 \PP(\mathcal E)
 =
 \frac{(3m)!}{(m!)^3 3^{3m}}
 =
 \Theta(m^{-1})
\]
by Stirling's formula. Consequently,
\[
 \PP\!\left(
   G_\Pi\text{ is not $(m,1/3,m^{-a})$-properly pseudorandom}
   \,\middle|\,\mathcal E
 \right)
 \le
 \frac{\PP(\mathcal B)+\PP(\mathcal D^c)}{\PP(\mathcal E)}
 =
 o(m^{-2}).
\]
Conditioning on $\mathcal E$, the tripartition $\Pi$ is a uniformly random ordered equipartition of $V^\circ$.  Therefore a proportion $1-o(m^{-2})$ of these equipartitions give an $(m,1/3,m^{-a})$-properly pseudorandom graph $G_\Pi$.

Theorem~\ref{thm:lowerbound} applies to every such graph $G_\Pi$ with $M=m$ and $p_0=p=1/3$.  Denote the constants supplied by the theorem by $\sigma,C>0$.  Since $a$ is fixed, these constants are independent of $m$, $S$, and $\Pi$.  Decrease $\sigma$ if necessary so that $0<\sigma<1$.  Each graph $G_\Pi$ has at least $\Lambda_m$ rainbow matchings with $m-1$ edges, where
\begin{equation}\label{eq:Lambda-m}
 \Lambda_m:=\exp\left(m\log\frac m3-2m-Cm^{1-\sigma}\right).
\end{equation}

Let $\mathcal P$ be the set of ordered equipartitions of $V^\circ$, and let $\mathcal Q$ consist of pairs $(\Pi,R)$ such that $\Pi\in\mathcal P$, the graph $G_\Pi$ is $(m,1/3,m^{-a})$-properly pseudorandom, and $R$ is an $(m-1)$-edge matching of $S$ that avoids $X$ and whose edges meet each part of $\Pi$ once.  The preceding proportion estimate and \eqref{eq:Lambda-m} give
\[
 |\mathcal Q|
 \ge\bigl(1-o(m^{-2})\bigr)
 \frac{(3m)!}{(m!)^3}\Lambda_m.
\]

Fix an $(m-1)$-edge matching $R$ that is in a pair of $\mathcal Q$.  It avoids $X$ and leaves exactly three vertices of $V^\circ$ uncovered.  The vertices of each edge of $R$ can be assigned bijectively to $A,B,C$ in $6$ ways, and the three uncovered vertices can also be assigned one to each part in $6$ ways.  Hence $R$ crosses exactly $6^{m-1}\cdot6=6^m$ ordered equipartitions of $V^\circ$.  The pseudorandomness requirement can only reduce this multiplicity.  Since the matchings that occur in some pair of $\mathcal Q$ form a subfamily of all $(m-1)$-edge matchings of $S$, we obtain
\[
 N_{m-1}(S)
 \ge\frac{|\mathcal Q|}{6^m}
 \ge\bigl(1-o(m^{-2})\bigr)
 \frac{(3m)!}{(m!)^3 6^m}\Lambda_m.
\]
Here, when $n=3m+1$, the argument counts only matchings avoiding the fixed vertex $x$, which is sufficient for a lower bound on $N_{m-1}(S)$.

For sufficiently large $m$, we have $1-o(m^{-2})\ge 1/2$, and Stirling's formula gives $(3m)!/((m!)^3 6^m)\ge 2m^{-2}(9/2)^m$.  Together with \eqref{eq:Lambda-m}, these estimates show that 
\[
\log N_{m-1}(S) \ge m\log\left(\frac{3m}{2\e^2}\right)-Cm^{1-\sigma}-2\log m.
\]

Since $3m\le n\le3m+1$, the inequality $\log(1+x)\le x$ gives $0\le m\log(n/(3m))\le1/3$.  Thus,
\[
 \log N_{m-1}(S)
 \ge m\log\left(\frac{n}{2\e^2}\right)-Cm^{1-\sigma}-2\log m-\frac13.
\]

Fix $0<c<\min\{\sigma,1/2\}$ for the rest of the proof.  Since $c<\sigma$ and $n\le4m$, we have $Cm^{1-\sigma}+2\log m+1/3\le mn^{-c}$ for sufficiently large $m$.  Also, $-mn^{-c}\ge m\log(1-n^{-c})$.  Combining these inequalities with the preceding lower bound gives
\[
 \log N_{m-1}(S)
 \ge m\log\left((1-n^{-c})\frac{n}{2\e^2}\right).
\]

\emph{Upper bound.}
Every vertex of $S$ has degree $D:=(n-1)/2$, and $\Delta_2(S)=1$.  The parameter in Corollary~\ref{cor:entropy-matchings} is therefore $\rho_S=(1+\binom32)/D=4/D=8/(n-1)$, which is at most $\rho_3$ for sufficiently large $n$.

An $(m-1)$-edge matching leaves $s:=n-3(m-1)\in\{3,4\}$ vertices uncovered.  Let $C_3>0$ be the constant supplied by Corollary~\ref{cor:entropy-matchings} for $k=3$.  For sufficiently large $n$, we have $D\ge n/4$ and $C_3(4/D)^{1/2}\le4C_3n^{-1/2}$.  The corollary therefore gives
\[
 N_{m-1}(S)
 \le
 \binom ns
 \left(\frac{D}{\e^2}
 \exp\bigl(4C_3n^{-1/2}\bigr)\right)^{m-1}.
\]
We have $\binom ns\le n^4$.  Since $D\le n/2$ and $n/(2\e^2)>1$ for sufficiently large $n$, we may replace $D$ by $n/2$ and increase the exponent from $m-1$ to $m$.
Taking logarithms gives
\[
 \log N_{m-1}(S)
 \le m\log\frac{n}{2\e^2}+4C_3mn^{-1/2}+4\log n.
\]

Since $c<1/2$ and $m\ge(n-1)/3$, we have $4C_3mn^{-1/2}+4\log n\le\tfrac12 mn^{-c}$ for sufficiently large $n$.  The inequality $\log(1+x)\ge x/2$ for $0\le x\le1$ gives $\tfrac12 mn^{-c}\le m\log(1+n^{-c})$.  Consequently,
\[
 \log N_{m-1}(S)
 \le m\log\left((1+n^{-c})\frac{n}{2\e^2}\right).
 \qedhere
\]
\end{proof}

\section{Concluding remarks}\label{sec:conclusion}

We end with one extension of our results and several related
counting questions.

\subsection*{Counting matchings of other sizes}
Theorems~\ref{thm:latin-main} and~\ref{thm:sts-main} concern the largest sizes that can be guaranteed in every Latin square and every Steiner triple system.  We can extend them to determine the numbers of partial transversals and matchings at every smaller size. We omit details here.

The situation of counting $n$-cell transversals or $m$-edge matchings is less clear, since the number of them could be zero.  It is natural to ask how large the numbers must be once they exist.  More precisely, what are $\min\bigl\{T_n(L):T_n(L)>0\bigr\}$ and $\min\bigl\{N_m(S):N_m(S)>0\bigr\}?$

\subsection*{Decompositions of Latin squares}
Two Latin squares of order $n$ are \emph{orthogonal} if the pairs of entries in corresponding cells are distinct so that all possible $n^2$ pairs appear exactly once. Finding two orthogonal Latin squares is equivalent to finding a Latin square which can be decomposed into $n$ disjoint full transversals. Let
\(D(L)\) denote the number of such decompositions of any given Latin square $L$.

For every \(n>3\), Wanless and Webb~\cite{WanlessWebb} constructed a Latin square of order $n$ containing a cell that lies in no
transversal, and hence having no decomposition. On the other hand, results of Keevash and Luria~\cite{KeevashColoured,Luria} imply that the average \(D(L)\) over all order-\(n\) Latin squares $L$ is $\left(\left(1+ o\left(1\right)\right)n/e^3\right)^{n^2}$, and Boyadzhiyska, Das and Szab\'o~\cite{BDS} proved the same estimate for the upper bound of each $L$. Bowtell and Montgomery~\cite{BowtellMontgomery} proved that a random Latin square \(L_n\) of order $n$ satisfies \(D(L_n)>0\) with high probability.  It is natural to ask whether $D(L_n)\ge \left(\left(1-o\left(1\right)\right)n/e^3\right)^{n^2}$ holds with high probability.

Motivated by our counting result, it would be interesting to know whether for sufficiently large $n$, for every Latin square $L$ of order $n$ and every cell $a$ of $L$, the cells other than $a$ can be partitioned into $n+1$ disjoint $(n-1)$-transversals. A weaker version is to ask whether such a cell $a$ exists for every $L$.

\subsection*{Resolutions of Steiner triple systems}

Suppose that \(n\equiv3\pmod6\) and \(d=(n-1)/2\). A \emph{resolution} of a Steiner triple system \(S\) of order \(n\) is a partition of its triples into \(d\) perfect matchings.  Let \(R(S)\) denote the number of such resolutions.

The results of Luria~\cite{Luria} and Dai, Divoux and Kelly~\cite{DaiDivouxKelly} imply that $R(S)\le \left(     (1+o(1))n/(2\e^3)  \right)^{n^2/6}$.
Ferber and Kwan~\cite{FerberKwan} proved that a random Steiner triple system $S_n$ of order $n$ is almost resolvable with high probability and conjectured that it is resolvable with high probability. These results suggest a counting question for resolutions: Does $R(S_n)\ge \left(     (1-o(1))n/(2\e^3)  \right)^{n^2/6}$ hold with high probability?

\medskip
\subsection*{Declaration on the use of AI}
ChatGPT was used to assist with the preparation and presentation of this manuscript. The authors independently verified all arguments and references and take full responsibility for the manuscript.

\appendix

\section{Typicality and proper pseudorandomness}
\label{app:montgomery-definitions}

We give Definitions~3.8--3.11 of Montgomery~\cite{MontgomeryRBS} below as Definitions~\ref{def:typical-bipartite}--\ref{def:proper-pseudorandom}.  
A matching is \emph{exactly-$D$-rainbow} if it uses every color in $D$ exactly once and no other colors.  For a positive integer $t$, write $[t]=\{1,\ldots,t\}$.

\begin{definition}\label{def:typical-bipartite}
A bipartite graph $H$ with vertex classes $A$ and $B$ is \emph{$(M,p,\varepsilon)$-typical} if the following conditions hold.
\begin{itemize}
\item $|A|=(1\pm\varepsilon)M$ and $|B|=(1\pm\varepsilon)M$.
\item For every $v\in V(H)$, $d_H(v)=(1\pm\varepsilon)pM.$
\item For each distinct $u,v\in V(H)$ with $u,v\in A$ or $u,v\in B$, we have $|N_H(u)\cap N_H(v)|=(1\pm\varepsilon)p^2M.$
\end{itemize}
\end{definition}

\begin{definition}\label{def:shadow-graph}
Given a $3$-uniform hypergraph $\cH$ and disjoint sets $X,Y\subseteq V(\cH)$, let $\cH_{X,Y}$ be the bipartite graph with vertex classes $X$ and $Y$ in which $xy$ is an edge precisely when there is some $z\in V(\cH)\setminus\{x,y\}$ such that $\{x,y,z\}\in E(\cH)$.
\end{definition}

\begin{definition}\label{def:typical-hypergraph}
A linear $3$-partite $3$-uniform hypergraph $\cH$ with vertex classes $A$, $B$, and $C$ is \emph{$(M,p,\varepsilon)$-typical} if each of $\cH_{A,B}$, $\cH_{B,C}$, and $\cH_{A,C}$ is $(M,p,\varepsilon)$-typical.
\end{definition}

\begin{definition}\label{def:proper-pseudorandom}
A bipartite graph $G$ with vertex classes $A$ and $B$ is \emph{$(M,p,\varepsilon)$-properly pseudorandom} if it is properly edge-colored and the following conditions hold with $\alpha=p^{12}/10^{100}$.
\begin{enumerate}
\item[\textnormal{(F1)}]
$|A|=|B|=M$ and $M\le |C(G)|\le(1+\varepsilon)M$.

\item[\textnormal{(F2)}]
$\cH(G)$ is $(M,p,\varepsilon)$-typical.

\item[\textnormal{(F3)}]
For each $c\in C(G)$ and $e\in E_c(G)$, for all but at most $\sqrt M$ edges $f\in E_c(G)\setminus\{e\}$, there are at least $\alpha M^2$ pairs $(S_1,S_2)$ such that $S_1$ and $S_2$ are vertex-disjoint rainbow $4$-cycles with $e\in E(S_1)$ and $f\in E(S_2)$, and the color sets of the two neighbouring edges of $e$ in $S_1$ and the two neighbouring edges of $f$ in $S_2$ are the same.

\item[\textnormal{(F4)}]
For each $u\in A$, $v\in B$, and $c_0\in C(G)$, there are pairwise disjoint sets $V_1,\ldots,V_{\alpha M}\subseteq V(G)\setminus\{u,v\}$
and pairwise disjoint sets $C_1,\ldots,C_{\alpha M}\subseteq C(G)\setminus\{c_0\}$ such that, for each $i\in[\alpha M]$, $|V_i|=4$, $|C_i|=3$, the graph $G[V_i]$ contains two color-$c_0$ edges and $G[\{u,v\}\cup V_i]$ contains an exactly-$C_i$-rainbow matching in $E(G)\setminus \{uv\}$.

\item[\textnormal{(F5)}]
For each distinct $c_0,d\in C(G)$, there are pairwise disjoint sets $V_1,\ldots,V_{\alpha M/12}\subseteq V(G)$ and pairwise disjoint sets $C_1,\ldots,C_{\alpha M/12}\subseteq C(G)\setminus\{c_0,d\}$
such that, for each $i\in[\alpha M/12]$, $|V_i|=8$, $|C_i|=3$, and $G[V_i]$ contains a matching of four color-$c_0$ edges and an exactly-$(C_i\cup\{d\})$-rainbow matching.

\item[\textnormal{(F6)}]
For any $c_0\in C(G)$, $0\le k\le20$, and any $\bar C\subseteq C(G)\setminus\{c_0\}$ with $|\bar C|\ge5k$, there are pairwise disjoint sets $\bar V_1,\ldots,\bar V_{\alpha M}\subseteq V(G)$ such that, for each $i\in[\alpha M]$, $|\bar V_i|=2k+2$ and $G[\bar V_i]$ contains both a matching of $k+1$ color-$c_0$ edges and a $\bar C$-rainbow matching with $k$ edges.

\item[\textnormal{(F7)}]
Set $k=100$.  For each $c_0\in C(G)$, there is some $r\in\mathbb N$ and pairwise disjoint sets $V_1,\ldots,V_r\subseteq V(G)$ and pairwise disjoint sets $C_1,\ldots,C_r\subseteq C(G)\setminus\{c_0\}$ with $|V_i|=2k$ and $|C_i|=k$ for each $i\in[r]$ such that the graph $G[V_i]$ contains an exactly-$C_i$-rainbow matching and a perfect matching of color-$c_0$ edges, and the following holds.

For every $\bar C\subseteq C(G)\setminus\{c_0\}$ with $|\bar C|\le k$, for at least $\alpha^2M$ values of $i\in[r]$, there are pairwise disjoint sets $\bar V_1,\ldots,\bar V_{\alpha M}\subseteq V(G)$ such that, for each $j\in[\alpha M]$, $|\bar V_j|=2k+2|\bar C|+2$ and $G[\bar V_j]$ contains both a matching of $k+|\bar C|+1$ color-$c_0$ edges and a $(\bar C\cup C_i)$-rainbow matching with $k+|\bar C|$ edges.
\end{enumerate}
\end{definition}

\end{document}